\theoremstyle{plain}
\newtheorem{theorem}{Theorem}
\newtheorem{corollary}{Corollary}
\newtheorem{proposition}{Proposition}
\newtheorem{lemma}{Lemma}
\theoremstyle{definition}
\newtheorem{definition}{Definition}
\newtheorem{notation}{Notation}
\begin{document}

\title[$X$-ranks]
{On the stratification by $X$-ranks of a linearly normal elliptic curve $X\subset \mathbb {P}^n$}
\author{Edoardo Ballico}
\address{Dept. of Mathematics\\
 University of Trento\\
38123 Povo (TN), Italy}
\email{ballico@science.unitn.it}
\thanks{The author was partially supported by MIUR and GNSAGA of INdAM (Italy).}
\subjclass{14N05; 14H52}
\keywords{ranks; border ranks; linearly normal elliptic curve; $X$-rank}

\begin{abstract}
Let $X\subset \mathbb {P}^n$ be a linearly normal elliptic curve. For any $P\in \mathbb {P}^n$
the $X$-rank of $P$ is the minimal cardinality of a set $S\subset X$ such that $P\in \langle S\rangle$.
In this paper we give an almost complete description of the stratification of $\mathbb {P}^n$ given by the $X$-rank
and the open $X$-rank. 
\end{abstract}

\maketitle

Fix an integral and non-degenerate variety $X\subset \mathbb {P}^n$. For any $P\in \mathbb {P}^n$ the $X$-rank $r_X(P)$ of $P$ is the minimal cardinality
of a subset $S\subset X$ such that $P\in \langle S\rangle$, where $\langle \ \ \rangle$ denote the linear span. The $X$-rank is an extensively studied topic
(\cite{lt}, \cite{bgl}, \cite{bgi}, \cite{l} and references therein). In the applications one needs only the cases in which
$X$ is either a Veronese embedding of a projective space or a Segre embedding of a multiprojective space. We feel that the general case gives a treasure of new projective geometry. Up to now only for
rational normal curves there is a complete description of the stratification of $\mathbb {P}^n$ by $X$-rank (\cite{cs}, \cite{lt}, Theorem 5.1, \cite{bgi}). Here we look at the case of elliptic linearly normal curves. For any integer $t \ge 1$ let $\sigma _t(X)$ denote the closure
in $\mathbb {P}^n$ of all $(t-1)$-dimensional linear spaces spanned by $t$ points of $X$. Set $\sigma _0(X)=\emptyset$.
For any $P\in \mathbb {P}^n$ the border $X$-rank $b_X(P)$ is the minimal integer $t\ge 1$ such that $P\in \sigma _t(X)$, i.e. the only positive
integer $t$ such that $P\in \sigma _t(X)\setminus \sigma _{t-1}(X)$. If (as always in this paper) $X$ is a curve,
then $\dim (\sigma _t(X)) = \min \{n,2t-1\}$ for all $t\ge 1$ (\cite{a}, Remark 1.6). Notice that $r_X(P) \ge b_X(P)$ and that equality holds at least on a non-empty open subset of $\sigma _t(X)\setminus \sigma _{t-1}(X)$, $t:= b_X(P)$.
Obviously $b_X(P) = 1$ $\Longleftrightarrow $ $P\in X$ $\Longleftrightarrow$ $r_X(P)=1$.
Hence to compute all $X$-ranks it is sufficient to compute the $X$-ranks of all points of $\mathbb {P}^n\setminus X$. In this paper we look at the case of the linearly normal elliptic curves.
We prove the following result.

\begin{theorem}\label{i1}
Let $X \subset \mathbb {P}^n$, $n \ge 3$, be a linearly normal elliptic curve. Fix $P\in \mathbb {P}^n\setminus X$ and set $w:= b_X(P)$. We have $2 \le w \le \lfloor (n+2)/2\rfloor$.
Assume $n \ge 2w+2$. Then either $r_X(P)=w$ or $r_X(P)=n+1-w$ and both cases occurs for some $P\in \sigma _w(X)\setminus
\sigma _{w-1}(X)$.
\end{theorem}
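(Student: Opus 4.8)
\medskip
\noindent\emph{Strategy of proof.}
The plan is to transfer everything to the intrinsic geometry of the elliptic curve $X$. Since $X$ is linearly normal and elliptic it is embedded by a complete linear system $|D|$ with $\deg D=n+1$; linear normality together with $h^1(\mathcal O_X(D))=0$ gives $h^1(\mathbb P^n,\mathcal I_{W}(1))=h^1(X,\mathcal I_{W,X}(1))$ for every zero-dimensional $W\subset X$. As $X$ is a smooth curve every such $W$ is an effective Cartier divisor, so Riemann--Roch and Serre duality ($\omega_X\cong\mathcal O_X$) give $h^1(\mathcal I_{W}(1))=h^0(\mathcal O_X(W-D))$, which vanishes exactly when $\deg W\le n$, or $\deg W=n+1$ and $W\not\sim D$. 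I will use throughout the standard ``separation lemma'': if $P\in\langle Z_1\rangle\cap\langle Z_2\rangle$ with $Z_1\ne Z_2$ zero-dimensional and $P$ lies in the span of no proper subscheme of $Z_1$ nor of $Z_2$, then $h^1(\mathcal I_{Z_1\cup Z_2}(1))>0$. Finally, since $X$ is a curve, $\sigma_w(X)=\bigcup\{\langle Z\rangle: Z\subset X,\ \deg Z=w\}$, so $P\in\langle Z\rangle$ for some $Z$ with $\deg Z=w$; because $b_X(P)=w$ no such $Z$ can be shrunk, and the separation lemma together with the hypothesis $2w\le n-2$ forces $Z$ to be \emph{unique}. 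The bound $2\le w\le\lfloor(n+2)/2\rfloor$ is then immediate from $P\notin X$ and $\dim\sigma_t(X)=\min\{n,2t-1\}$.

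\medskip
The dichotomy will depend only on whether the unique witness $Z$ is reduced. If $Z$ is reduced then $r_X(P)\le\deg Z=w$, while $r_X(P)\ge b_X(P)=w$, so $r_X(P)=w$. If $Z$ is not reduced, I first prove $r_X(P)\ge n+1-w$: choose $S\subset X$ reduced and minimal with $|S|=r_X(P)$ and $P\in\langle S\rangle$; if $|S|\le n-w$ then $\deg(Z\cup S)\le n$, hence $h^1(\mathcal I_{Z\cup S}(1))=0$, contradicting the separation lemma applied to the distinct minimal schemes $Z$ and $S$. That both values are realized is then easy: a general $P$ in the span of $w$ general distinct points of $X$ has $b_X(P)=w$ with reduced witness, hence rank $w$; a general $P$ in $\langle\,2p_1+p_2+\cdots+p_{w-1}\,\rangle$ has $b_X(P)=w$ with non-reduced witness, hence (by the lower bound just proved together with the upper bound established below) rank $n+1-w$. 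In both cases $b_X(P)=w$ holds because $P\in\sigma_{w-1}(X)$ would produce a second scheme of degree $\le w-1$ through $P$ and then $\deg(Z\cup Z')\le 2w-1\le n-3$ would contradict the separation lemma.

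\medskip
The core of the argument is the upper bound $r_X(P)\le n+1-w$ in the non-reduced case. Since $\deg(D-Z)=n+1-w\ge w+3\ge 5$, the linear system $|D-Z|$ is very ample; for a general member $E\in|D-Z|$ (reduced, disjoint from $Z$) one has $Z\sqcup E\sim D$, so $\langle Z\sqcup E\rangle$ is a hyperplane and a Grassmann dimension count shows $\langle Z\rangle\cap\langle E\rangle$ is a single point, defining a rational map $\psi\colon|D-Z|\dashrightarrow\langle Z\rangle$. Using the multiplication pairing $H^0(\mathcal O_X(D-Z))\times H^0(\mathcal O_X(Z))\to H^0(\mathcal O_X(D))$ composed with the functional $\ell_P$ cutting out $P$, I identify the fibre $\psi^{-1}(P)$ with $\mathbb P(\ker\alpha_P)$ for the linear map $\alpha_P\colon H^0(\mathcal O_X(D-Z))\to\bigl(H^0(\mathcal O_X(Z))/\langle s_Z\rangle\bigr)^\vee$ (where $s_Z$ is the section of $\mathcal O_X(Z)$ with divisor $Z$): indeed for $s\in H^0(\mathcal O_X(D-Z))$ one has $H^0(\mathcal I_{Z(s),X}(1))=s\cdot H^0(\mathcal O_X(Z))$, so $P\in\langle Z(s)\rangle$ iff $\ell_P$ annihilates this space, i.e.\ iff $\alpha_P(s)=0$. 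If $\alpha_P$ failed to have maximal rank $w-1$, some $g\notin\langle s_Z\rangle$ would give $P\in\langle Z(g)\rangle$ with $Z(g)\sim Z$, $\deg Z(g)=w$ and $Z(g)\ne Z$, contradicting uniqueness of the witness; hence $\psi^{-1}(P)$ is a linear subsystem of $|D-Z|$ of dimension $n-2w+1\ge 3$. Writing $\psi^{-1}(P)=B+|V'|$ with $|V'|$ its base-point-free part, Bertini shows a general member of $|V'|$ is reduced, so a general member $E$ of $\psi^{-1}(P)$ is reduced provided $B$ is reduced, and then $r_X(P)\le\deg E=n+1-w$, which with the previous paragraph yields $r_X(P)=n+1-w$.

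\medskip
The one genuinely delicate point — and the step I expect to be the main obstacle — is showing that the base locus $B$ of $\psi^{-1}(P)$ is reduced. Because $|V'|$ is base-point-free of projective dimension $n-2w+1$ on the non-rational curve $X$, its members have degree $\ge n-2w+2$, which forces $\deg B\le w-1$. If $B$ were non-reduced, say $2p'\le B$, then every $E\in\psi^{-1}(P)$ would contain $2p'$, so $\langle E\rangle\supseteq\langle B\rangle$ for all such $E$; projecting from $\langle B\rangle$ (which embeds $X$, since $\deg(D-B)\ge n+2-w$) and intersecting the spans of the images of the moving parts $E-B$ over the base-point-free system $|V'|$, one should conclude $P\in\langle B\rangle$, whence $b_X(P)\le\deg B\le w-1$, contradicting $b_X(P)=w$. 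Making this last deduction precise — controlling $\bigcap_{E\in\psi^{-1}(P)}\langle E\rangle$ and pinning $P$ into $\langle B\rangle$ — is where real care is needed; the rest is bookkeeping with Riemann--Roch on $X$ and repeated use of the separation lemma.
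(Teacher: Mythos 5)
Your reduction is sound up to one point, and it is exactly the point you flag yourself: the reducedness of the fixed part $B$ of the linear system $\psi^{-1}(P)=\mathbb P(\ker\alpha_P)$. Everything before that checks out, and is in fact a genuine streamlining of the paper's argument: the observation that $\{E\in |D-Z|: P\in\langle E\rangle\}$ is a \emph{linear} subsystem (via the pairing $\ell_P(st)$, with the direction $s_Z$ automatically annihilated because $P\in\langle Z\rangle$), together with the maximal-rank argument (a second $g\in H^0(\mathcal O_X(Z))$ killing the pairing would give a second degree-$w$ scheme $Z(g)\ne Z$ with $P\in\langle Z(g)\rangle$, contradicting $b_X(P)=w$ and the uniqueness coming from Lemma \ref{a1}), lets you settle the mobile part with plain characteristic-zero Bertini. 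The paper instead treats the corresponding family $\mathcal S$ only as an involution and has to invoke \cite{cc1}, Proposition 5.8; your linearity observation makes that machinery unnecessary for that part.

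But the base-locus step is not a loose end to be tidied: it is the crux of the theorem, and your sketch for it does not go through as stated. From ``every member of $\psi^{-1}(P)$ contains $2p'$'' you want to conclude $P\in\langle B\rangle$ by intersecting the spans $\langle B+M\rangle$ over $M\in |V'|$. Each such span is an $(n-w)$-plane containing $\langle B\rangle$, the system $|V'|$ has dimension only $n-2w+1$, and nothing you have established prevents $\bigcap_{M}\langle B+M\rangle$ from being strictly larger than $\langle B\rangle$ (already for two general members the intersection has dimension about $n-2w\gg \deg B-1$); so $P$ is not pinned into $\langle B\rangle$, and in fact the true statement is not ``non-reduced $B$ is impossible because $P\in\langle B\rangle$'' proved by span-intersection, but rather a structural constraint on $B$ obtained from the uniqueness/disjointness of border-rank witnesses. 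This is exactly what the paper's step (b) does: it picks a point $O$ in the base locus, projects $X$ from a general set $E_1$ containing $O$ onto a low-dimensional linearly normal elliptic curve, observes that the base point produces a positive-dimensional family of minimal schemes whose spans all contain the image of $P$, and then uses part (a) of Proposition \ref{f2} (any two distinct schemes evincing the border rank are disjoint and sum to a hyperplane section) to force the base locus to be a single reduced point, after which the reduced member of the system is produced. Some argument of this kind — using Proposition \ref{f2} (or an equivalent statement about $\mathcal Z(C,P)$ for points of submaximal rank) after an auxiliary projection — is what your write-up is missing; without it the upper bound $r_X(P)\le n+1-w$, and hence the dichotomy, is not proved.
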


The inequalities $2 \le w \le \lfloor (n+2)/2\rfloor$ in the statement of Theorem \ref{i1} are obvious (\cite{a}, Remark 1.6). The case $w=2$ and arbitrary $n$ was settled in \cite{bgi}, Theorem
3.13.
Theorem \ref{i1} leaves partially open the cases $n = 2w$, $n= 2w-1$ and $n=2w-2$ (in which either $r_X(P) = w$ or $r_X(P) \ge n+1-w$). If $n=2w-1$, then either
$r_X(P) =w = n+1-w$ or $r_X(P) \ge w+1$ and the latter case occurs for a non-empty codimension two subset of points of $\mathbb {P}^n$ (Proposition \ref{i00}). In this case
we also have a non-trivial result on the set of all zero-dimensional schemes $Z \subset X$ evincing the border rank of the points $P$ with maximal border rank (Proposition \ref{f2}). The case $n=3$ is contained in \cite{p} (here we have $r_X(P) \le 3$ and in characteristic zero to get this inequality
it is sufficient to quote \cite{lt}, Proposition 4.1).

Following works by A. Bia\l ynicki-Birula and A. Schinzel (\cite{bs1}, \cite{bs2}), J. Jelisiejew introduced the definition of open rank for symmetric tensors, i.e. for the Veronese embeddings of projective spaces (\cite{j}). In the general case of $X$-rank we may translate the definition of open rank in the following way.

\begin{definition}\label{oo0}
Fix an integral and non-degenerate variety $X\subset \mathbb {P}^n$. For each $P\in \mathbb {P}^n$ the {\it open $X$-rank} $w_X(P)$ of $P$ is the minimal integer $t$
such that for every proper closed subset $T\subsetneq X$ there is $S\subset X$ with $\sharp (S)\le t$ and $P\in \langle S\rangle$.
\end{definition}
Obviously $w_X(P) \ge r_X(P)$, but
often the strict inequality holds (e.g., $w_X(P)>1$ for all $P$). For linearly normal elliptic curves we prove the following result.

\begin{theorem}\label{oo1}
Fix integers $w >0$ and $n \ge 2w+2$. Let $X\subset \mathbb {P}^n$ be a linearly normal elliptic curve. Fix a zero-dimensional scheme $W\subset X$ and $P\in \mathbb {P}^n$ such
that $\deg (W)=w$, $P\in  \langle W\rangle$ and $P\notin \langle W'\rangle$ for any $W'\subsetneq W$. Fix any finite set $U\subset X$. Then there is
$E\subset X\setminus U$ such that $\sharp (E) =n+1-w$ and $P\in \langle E\rangle$. There is no set $F\subset X$ such that $\sharp (F) \le n-w$, $F\cap W =\emptyset$
and $P\in \langle F\rangle$.
\end{theorem}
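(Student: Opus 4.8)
\emph{Conventions and a preliminary reduction.} Write $L:=\mathcal O_X(1)$. As $X$ is linearly normal and elliptic, $\deg X=n+1$ and $H^0(\mathbb P^n,\mathcal O(1))\cong H^0(X,L)$; moreover for any zero-dimensional $Z\subset X$ with $\deg Z\le n$ Riemann--Roch gives $h^1(L(-Z))=0$, hence $h^1(\mathcal I_Z(1))=0$, so $Z$ spans a $\mathbb P^{\deg Z-1}$. I will also use the standard fact that $h^1(\mathcal I_{Z_1\cup Z_2}(1))=0$ forces $\langle Z_1\rangle\cap\langle Z_2\rangle=\langle Z_1\cap Z_2\rangle$. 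From these and $n\ge 2w+2$ one first deduces that $b_X(P)=w$ and that $W$ is the \emph{unique} zero-dimensional scheme of degree $\le w$ with $P\in\langle W\rangle$: given a competitor $W'$, the scheme $W\cup W'$ has degree $\le 2w\le n-2$, so $\langle W\rangle\cap\langle W'\rangle=\langle W\cap W'\rangle$, which contains $P$ and is the span of a proper subscheme of $W$ unless $W'=W$, contradicting minimality of $W$. I would establish this at the outset.

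\emph{Non-existence of $F$.} If $F\subset X$ satisfies $\sharp F=f\le n-w$, $F\cap W=\emptyset$ and $P\in\langle F\rangle$, then $W\cup F$ has degree $w+f\le n$, so $\dim\langle W\cup F\rangle=w+f-1=\dim\langle W\rangle+\dim\langle F\rangle$, whence $\langle W\rangle\cap\langle F\rangle=\emptyset$ by Grassmann's formula --- contradicting $P\in\langle W\rangle\cap\langle F\rangle$. That is the whole argument.

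\emph{Existence of $E$: reduction to a linear system.} If $E\subset X$ is a set with $\sharp E=n+1-w$, $E\cap W=\emptyset$ and $P\in\langle E\rangle$, then $W\cup E$ has degree $n+1$ while $\dim\langle W\cup E\rangle\le\dim\langle W\rangle+\dim\langle E\rangle=n-1$ (since $P\in\langle W\rangle\cap\langle E\rangle$); a degree-$(n+1)$ subscheme spanning less than $\mathbb P^n$ lies in $|L|$, so $E\in|L(-W)|$. I would therefore search for $E$ inside $|L(-W)|$, a system of dimension $n-w$ with $L(-W)$ very ample ($\deg L(-W)=n+1-w\ge 4$). For $t\in H^0(L(-W))$, the space of sections of $L$ vanishing on $\operatorname{div}t$ equals $t\cdot H^0(\mathcal O_X(W))$ inside $H^0(L)$, so $P\in\langle\operatorname{div}t\rangle$ iff $t\cdot H^0(\mathcal O_X(W))$ lies in the hyperplane $P^{\perp}\subset H^0(L)$ of sections vanishing at $P$. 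Dualizing the multiplication map $H^0(L(-W))\otimes H^0(\mathcal O_X(W))\to H^0(L)$ against $P$ yields a linear map $M_P\colon H^0(L(-W))\to H^0(\mathcal O_X(W))^{\vee}$ with $P\in\langle\operatorname{div}t\rangle\iff t\in\ker M_P$. Hence $Y_P:=\{E\in|L(-W)|:P\in\langle E\rangle\}=\mathbb P(\ker M_P)$ is a \emph{linear} subsystem of $|L(-W)|$. A dual computation, using precisely the uniqueness above (a section $u$ with $P\in\langle\operatorname{div}u\rangle$ and $\deg\operatorname{div}u=w$ must cut out $W$), identifies $\ker(M_P^{\vee})$ with the line spanned by the section cutting $W$; so $\operatorname{rank}M_P=w-1$ and $\dim Y_P=n+1-2w\ge 3$.

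\emph{Existence of $E$: base-point-freeness, and the obstacle.} Granting that $Y_P$ is base-point-free, its general member is a reduced divisor of degree $n+1-w$ (Bertini), and requiring it to avoid the fixed finite set $U$ removes only finitely many hyperplanes from $Y_P\cong\mathbb P^{\,n+1-2w}$; a general such member is the desired $E$. The base-point-freeness of $Y_P$ is where I expect the real work, and it is exactly the point at which the minimality of $W$ --- not merely $P\in\langle W\rangle$ --- must be used: if $p$ were a base point one could take $U\ni p$ and defeat the statement. The plan: a base point $p$ would give $\ker M_P\subset H^0(L(-W-p))$, so $M_P$ restricted to $H^0(L(-W-p))$ has rank $w-2$; dualizing produces $u_1\in H^0(\mathcal O_X(W))$ not cutting $W$ with $u_1\cdot H^0(L(-W-p))\subset P^{\perp}$, and hence $P\in\langle D+p\rangle$ for \emph{every} $D$ in the pencil $\Pi\subset|\mathcal O_X(W)|$ spanned by $u_1$ and the section cutting $W$. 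For two general $D,D'\in\Pi$, $\langle D+p\rangle\cap\langle D'+p\rangle=\langle(D+p)\cap(D'+p)\rangle=\langle B+p\rangle$, where $B$ is the base locus of $\Pi$ and $\deg B\le w-2$ because a moving pencil on an elliptic curve has degree $\ge 2$; thus $P\in\langle B+p\rangle$ with $\deg(B+p)\le w-1$, contradicting $b_X(P)=w$ when $w\ge 2$, and forcing $P\in X$ (absurd, since then $b_X(P)=1$) when $w=1$. (For $w=1$ one may instead observe directly that $M_P=0$, so $Y_P=|L(-W)|$ is visibly base-point-free.) This completes the construction of $E$, and the whole difficulty of the theorem is concentrated in this base-point-freeness step.
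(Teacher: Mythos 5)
Your proof is correct, and it takes a genuinely different route from the paper's. The paper also works inside $\mathcal S:=\{Z\in |\mathcal O_X(1)(-W)| : P\in\langle Z\rangle\}$, but it never uses that this locus is linear: it constructs a subfamily $\{E+A_E\}$ by projecting from general points onto an elliptic normal curve in a smaller projective space, controls it via Proposition \ref{f2}, and obtains reducedness of a general member of a component of $\mathcal S$ from the Chiantini--Ciliberto theory of involutions (\cite{cc1}, Proposition 5.8), with a separate, rather delicate discussion of a possible base locus. You instead observe that $P\in\langle \mathrm{div}(t)\rangle$ is the linear condition $M_P(t)=0$, so $\mathcal S=\mathbb P(\ker M_P)$ is a linear subsystem of $|\mathcal O_X(1)(-W)|$; the uniqueness of $W$ among schemes of degree $\le w$ spanning $P$ (your preliminary reduction, which is the content of Lemmas \ref{a1}, \ref{e7.1} and \ref{o1}) identifies $\ker M_P^{\vee}$ with the line of the section cutting out $W$, giving $\dim\mathcal S=n+1-2w\ge 3$; your pencil argument reduces base-point-freeness once more to that uniqueness (a base point would produce a scheme of degree $\le w-1$ spanning $P$); and classical Bertini in characteristic zero, together with the fact that the members through a fixed non-base point form a hyperplane of $\mathcal S$, yield a reduced member avoiding $U$. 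Your treatment of $F$ is the same Grassmann computation the paper extracts from Lemma \ref{a1}. What your route buys: it bypasses the projection construction, Proposition \ref{f2} and \cite{cc1} altogether, and it gives the sharper structural fact that the divisors of $|\mathcal O_X(1)(-W)|$ spanning $P$ form a projective space of dimension $n+1-2w$ (note this is not the dimension $n-2w-1$ asserted for $\Gamma$ in the paper's step (b); your count is the one consistent with the case $w=1$ and with the lower bound obtained by cutting $|\mathcal O_X(1)(-W)|$ by $w$ linear conditions, so the discrepancy is not a defect of your argument). The only points worth writing out in full are the equalities $t\cdot H^0(\mathcal O_X(W))=H^0(\mathcal O_X(1)(-\mathrm{div}(t)))$ and $u_1\cdot H^0(\mathcal O_X(1)(-W-p))=H^0(\mathcal O_X(1)(-\mathrm{div}(u_1)-p))$ (equalities by degree count on an elliptic curve) and the fact that two distinct members of a pencil intersect exactly in its base divisor; these are routine, so I see no gap.
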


As an immediate corollary of Theorem \ref{oo1} we get the following result.

\begin{corollary}\label{oo2}
Fix integers $w >0$ and $n \ge 2w+2$. Let $X\subset \mathbb {P}^n$ be a linearly normal elliptic curve. Take any $P\in \sigma _w(X)\setminus \sigma _{w-1}(X)$,
i.e. with $b_X(P) =w$. Then $w_X(P) =n+1-w$.
\end{corollary}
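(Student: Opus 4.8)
The plan is to obtain both inequalities $w_X(P)\le n+1-w$ and $w_X(P)\ge n+1-w$ directly from Theorem~\ref{oo1}. The first step is to produce a zero-dimensional scheme $W$ witnessing the border rank of $P$: since $P\in \sigma_w(X)\setminus\sigma_{w-1}(X)$ and $X$ is a smooth curve, $P$ lies in the span $\langle W\rangle$ of some length-$w$ subscheme $W\subset X$, and minimality of $w=b_X(P)$ forces $P\notin\langle W'\rangle$ for every $W'\subsetneq W$ (otherwise $b_X(P)<w$). Thus $W$ and $P$ satisfy the hypotheses of Theorem~\ref{oo1}.

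For the upper bound, I would observe that, $X$ being an integral curve, every proper closed subset $T\subsetneq X$ is finite. Feeding such a $T$ into Theorem~\ref{oo1} as the finite set $U$ yields a set $E\subset X\setminus T$ with $\sharp(E)=n+1-w$ and $P\in\langle E\rangle$. Since $T$ was an arbitrary proper closed subset, Definition~\ref{oo0} gives $w_X(P)\le n+1-w$.

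For the lower bound, I would take $T$ to be the (finite, hence proper closed) support of $W$. The final assertion of Theorem~\ref{oo1} rules out any $F\subset X$ with $\sharp(F)\le n-w$, $F\cap W=\emptyset$ and $P\in\langle F\rangle$; in particular no subset of $X\setminus T$ of cardinality $\le n-w$ spans $P$. Hence the value $t=n-w$ fails the requirement in Definition~\ref{oo0} for this particular closed set $T$, so $w_X(P)\ge n+1-w$, and the two bounds combine to the claimed equality.

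I do not expect a serious obstacle here — the statement is flagged as an immediate corollary — but two points deserve a line of justification: that a proper closed subset of the curve $X$ is automatically finite, so that Theorem~\ref{oo1} (stated for a finite set $U$) applies verbatim to the closed sets occurring in Definition~\ref{oo0}; and the standard fact, used in the first step, that a point of border rank $w$ on the secant variety of a smooth curve is spanned by a length-$w$ subscheme of the curve, the spanning scheme then being automatically minimal.
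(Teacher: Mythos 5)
Your derivation is correct and is exactly the deduction the paper has in mind when it calls the statement an immediate consequence of Theorem~\ref{oo1}: the upper bound comes from applying the existence part of that theorem with $U=T$ (proper closed subsets of the integral curve $X$ being finite), and the lower bound from applying the non-existence part with $T=\operatorname{Supp}(W)$, where $W$ is a degree-$w$ scheme evincing $b_X(P)$ (whose existence and minimality follow from Proposition~\ref{e8}, since $w\le\lfloor n/2\rfloor=\lfloor\beta(X)/2\rfloor$ when $n\ge 2w+2$). No gap; this matches the paper's (implicit) argument.
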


We work over an algebraically closed field $\mathbb {K}$ such that $\mbox{char}(\mathbb {K})=0$. This assumption is essential in our proofs, mainly to quote \cite{cc1}, Proposition 5.8, which
is a very strong non-linear version of Bertini's theorem.

\section{Preliminary lemmas}\label{S2}

In this paper an elliptic curve is a smooth and connected projective curve with genus $1$.

Fix any non-degenerate variety $X\subset \mathbb {P}^n$. For any $P\in \mathbb {P}^n$ let $\mathcal {S}(X,P)$ denote
the set of all $S \subset X$ evincing $r_X(P)$, i.e. the set of all $S \subset X$ such that $\sharp (S)=r_X(P)$
and $P\in \langle S\rangle$. Notice that every $S\in \mathcal {S}(X,P)$ is linearly independent and
$P\notin \langle S'\rangle$ for any $S'\subsetneqq S$. Now assume that $X$ is a linearly normal elliptic curve.
Let $\mathcal {Z}(X,P)$ denote the set of all zero-dimensional subschemes $Z\subset X$ such that
$\deg (Z)=b_X(P)$ and $P\in \langle Z\rangle$. Lemma \ref{o1} below gives $\mathcal {Z}(X,P)\ne \emptyset$. Fix
any $Z\in \mathcal {Z}(X,P)$. Notice that $Z$ is linearly independent (i.e. $\dim (\langle Z\rangle )=\deg (Z)-1$)
and $P\notin \langle Z'\rangle$ for any subscheme $Z'\subsetneqq Z$.

\begin{notation}\label{e7}
Let $C\subset \mathbb {P}^n$ be a smooth, connected and non-degenerate
curve. Let $\beta (C)$ be the maximal integer such that every
zero-dimensional subscheme of
$C$ with degree at most $\beta (C)$ is linearly independent.
\end{notation}

The following lemma is just a reformulation of \cite{bb0}, Lemma 1.

\begin{lemma}\label{a1}
Let $Y \subset \mathbb {P}^r$ be an integral variety. Fix any $P\in \mathbb {P}^r$ and two zero-dimensional subschemes $A$, $B$ of $Y$ such that $A \ne B$, $P\in \langle A\rangle$,
$P\in \langle B\rangle$, $P\notin \langle A'\rangle$ for any $A'\subsetneqq A$ and $P\notin \langle B'\rangle$ for any $B'\subsetneqq B$.
Then $h^1(\mathbb {P}^r,\mathcal {I}_{A\cup B}(1)) >0$.
\end{lemma}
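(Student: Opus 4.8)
The plan is to argue by contradiction, so assume $h^1(\mathbb{P}^r,\mathcal{I}_{A\cup B}(1))=0$, where $A\cup B$ denotes the scheme-theoretic union (ideal sheaf $\mathcal{I}_A\cap\mathcal{I}_B$). For a zero-dimensional scheme $Z$ one has $h^0(\mathcal{O}_Z(1))=\deg(Z)$ and $h^1(\mathcal{O}_{\mathbb{P}^r}(1))=0$, so the long exact sequence of $0\to\mathcal{I}_Z(1)\to\mathcal{O}_{\mathbb{P}^r}(1)\to\mathcal{O}_Z(1)\to 0$ gives $\dim\langle Z\rangle=\deg(Z)-1-h^1(\mathcal{I}_Z(1))$; in particular $h^1(\mathcal{I}_Z(1))=0$ is equivalent to $Z$ being linearly independent. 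Moreover, if $Z'\subseteq Z$ then $\mathcal{I}_{Z'}/\mathcal{I}_Z$ is supported on the finite scheme $Z$, hence $h^1((\mathcal{I}_{Z'}/\mathcal{I}_Z)(1))=0$, and the exact sequence $0\to\mathcal{I}_Z(1)\to\mathcal{I}_{Z'}(1)\to(\mathcal{I}_{Z'}/\mathcal{I}_Z)(1)\to 0$ exhibits $h^1(\mathcal{I}_{Z'}(1))$ as a quotient of $h^1(\mathcal{I}_Z(1))$. Applying this with $Z=A\cup B$, I conclude that $A$, $B$ and the scheme-theoretic intersection $W:=A\cap B$ (ideal sheaf $\mathcal{I}_A+\mathcal{I}_B$) are all linearly independent.

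Next I use the Mayer--Vietoris exact sequence $0\to\mathcal{O}_{A\cup B}\to\mathcal{O}_A\oplus\mathcal{O}_B\to\mathcal{O}_W\to 0$, which gives the length identity $\deg(A\cup B)=\deg(A)+\deg(B)-\deg(W)$. Combining this with the Grassmann formula $\dim(\langle A\rangle\cap\langle B\rangle)=\dim\langle A\rangle+\dim\langle B\rangle-\dim\langle A\cup B\rangle$ (using $\langle A\cup B\rangle=\langle A\rangle+\langle B\rangle$) and the equalities $\dim\langle A\rangle=\deg(A)-1$, $\dim\langle B\rangle=\deg(B)-1$, $\dim\langle A\cup B\rangle=\deg(A\cup B)-1$ coming from linear independence, I obtain $\dim(\langle A\rangle\cap\langle B\rangle)=\deg(W)-1=\dim\langle W\rangle$. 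Since $W\subseteq A$ and $W\subseteq B$ force $\langle W\rangle\subseteq\langle A\rangle\cap\langle B\rangle$, the coincidence of dimensions yields $\langle A\rangle\cap\langle B\rangle=\langle W\rangle$.

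To finish, observe $P\in\langle A\rangle\cap\langle B\rangle=\langle W\rangle$. Since $A\ne B$ we cannot have $W=A$ and $W=B$ simultaneously; say $W\subsetneqq A$ (the case $W\subsetneqq B$ is symmetric). Then $W$ is a proper subscheme of $A$ with $P\in\langle W\rangle$, contradicting the hypothesis that $P\notin\langle A'\rangle$ for every $A'\subsetneqq A$. This contradiction proves $h^1(\mathbb{P}^r,\mathcal{I}_{A\cup B}(1))>0$.

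I expect the one genuinely delicate point to be the middle step, the identity $\langle A\rangle\cap\langle B\rangle=\langle A\cap B\rangle$ under the linear-independence hypothesis: it requires pairing the scheme-theoretic length additivity from Mayer--Vietoris with the Grassmann count and checking that all four schemes $A$, $B$, $W$, $A\cup B$ are linearly independent. The inclusion $\langle A\cap B\rangle\subseteq\langle A\rangle\cap\langle B\rangle$ is automatic, but the reverse inclusion is precisely what breaks down when $h^1(\mathcal{I}_{A\cup B}(1))\neq 0$; everything else in the argument is routine manipulation of the standard exact sequences. (This is the usual minimality/concision argument and matches \cite{bb0}, Lemma 1.)
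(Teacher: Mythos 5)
Your proof is correct: the observation that $h^1(\mathcal{I}_{A\cup B}(1))=0$ forces $A$, $B$, $A\cup B$ and $A\cap B$ all to be linearly independent, combined with the Mayer--Vietoris length identity and the Grassmann formula, yields $\langle A\rangle\cap\langle B\rangle=\langle A\cap B\rangle$, and the minimality hypotheses then give the contradiction since $A\ne B$ makes $A\cap B$ a proper subscheme of $A$ or of $B$. The paper gives no in-text proof, only the citation of \cite{bb0}, Lemma 1, and your argument is essentially the standard one underlying that cited lemma, so nothing further is needed.
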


\begin{proposition}\label{e8}
Fix an integer $k \le \lfloor \beta (C)/2\rfloor$ and any $P\in \sigma _k(C)\setminus \sigma _{k-1}(C)$. Then there
exists a unique zero-dimensional scheme $Z\subset C$ such that $\deg (Z)\le k$ and $P\in \langle Z\rangle$.
Moreover $\deg (Z)=k$ and $P\notin \langle Z'\rangle$ for all $Z'\subsetneqq Z$.
\end{proposition}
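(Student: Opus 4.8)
The plan is to extract everything from the geometry of the family of linear spans of length-$k$ subschemes of $C$, using the hypothesis $k\le\lfloor\beta(C)/2\rfloor$ in two ways: via $k\le\beta(C)$ to pin down the border rank, and via $2k\le\beta(C)$ to control the union of two competing schemes. First I would fix an integer $m\le\beta(C)$ and form the incidence variety $I_m:=\{(W,Q)\in C^{(m)}\times\mathbb{P}^n:\ Q\in\langle W\rangle\}$, where $C^{(m)}$ is the $m$-th symmetric product of $C$ (an irreducible projective variety parametrising the length-$m$ subschemes of $C$). Because $m\le\beta(C)$, every $W\in C^{(m)}$ is linearly independent by Notation \ref{e7}, so $\dim\langle W\rangle=m-1$ is constant; hence the first projection $I_m\to C^{(m)}$ is a $\mathbb{P}^{m-1}$-bundle and $I_m$ is irreducible. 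The second projection $I_m\to\mathbb{P}^n$ is proper, so its image is closed; it contains $\langle S\rangle$ for every reduced $W=S$, and these spans are dense in $\sigma_m(C)$, while conversely the whole image lies in $\sigma_m(C)$ since the reduced divisors are dense in the irreducible $C^{(m)}$. Thus the image of $I_m$ is exactly $\sigma_m(C)$, and I record two consequences valid for $m\le\beta(C)$: (i) if $Q\in\sigma_m(C)$ then $Q\in\langle W\rangle$ for some length-$m$ scheme $W\subset C$; (ii) $\langle W\rangle\subseteq\sigma_m(C)$ for every length-$m$ scheme $W\subset C$.

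Next I would establish the ``moreover'' part together with a normalisation used for uniqueness: any zero-dimensional $Z\subset C$ with $\deg Z\le k$ and $P\in\langle Z\rangle$ automatically has $\deg Z=k$ and satisfies $P\notin\langle Z'\rangle$ for every $Z'\subsetneqq Z$. Indeed, setting $m:=\deg Z\le k\le\beta(C)$, consequence (ii) gives $P\in\langle Z\rangle\subseteq\sigma_m(C)$, so $b_C(P)\le m\le k$; since $P\notin\sigma_{k-1}(C)$ we get $b_C(P)=k$ and hence $m=k$. If some $Z'\subsetneqq Z$ satisfied $P\in\langle Z'\rangle$, then $\deg Z'\le k-1<\beta(C)$ and (ii) would force $P\in\sigma_{\deg Z'}(C)\subseteq\sigma_{k-1}(C)$, a contradiction. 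Existence of $Z$ is then immediate from (i) applied with $m=k$ and $Q=P$ (recall $P\in\sigma_k(C)$).

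Finally, for uniqueness, suppose $Z_1\ne Z_2$ are zero-dimensional subschemes of $C$ with $\deg Z_i\le k$ and $P\in\langle Z_i\rangle$. By the previous paragraph each $Z_i$ has $\deg Z_i=k$ and is minimal for the property of containing $P$ in its span, so Lemma \ref{a1} (with $Y=C$, $A=Z_1$, $B=Z_2$) yields $h^1(\mathbb{P}^n,\mathcal{I}_{Z_1\cup Z_2}(1))>0$. On the other hand $\deg(Z_1\cup Z_2)\le\deg Z_1+\deg Z_2=2k\le\beta(C)$, so by Notation \ref{e7} the scheme $Z_1\cup Z_2$ is linearly independent, which is equivalent to $h^1(\mathbb{P}^n,\mathcal{I}_{Z_1\cup Z_2}(1))=0$; this contradiction forces $Z_1=Z_2$. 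The only step I expect to require genuine care is the identification of the image of $I_m$ with $\sigma_m(C)$ — checking that, as a reduced configuration of $m$ points on $C$ degenerates, the limit of its linear span is the span of the limit length-$m$ scheme and no dimension-jumping occurs. This is precisely where $m\le\beta(C)$ is used, to keep $\dim\langle W\rangle$ locally constant, and in a rigorous write-up one would either invoke flatness of $I_m\to C^{(m)}$ or cite the corresponding statement in \cite{bb0}.
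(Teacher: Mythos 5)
Your argument is correct and is essentially the paper's: the uniqueness and the ``moreover'' clause are obtained, exactly as in the paper, from Lemma \ref{a1} together with the definition of $\beta(C)$ via $\deg(Z_1\cup Z_2)\le 2k\le\beta(C)$, while your incidence-bundle construction $I_k\to C^{(k)}$ simply writes out the existence statement that the paper quotes from \cite{bb2}, Lemma 1 (cf. \cite{bgl}, Lemma 2.1.6, and \cite{bgi}, Proposition 11). The one delicate point --- that the image of $I_k$ in $\mathbb{P}^n$ is exactly $\sigma_k(C)$, which needs $k\le\beta(C)$ to keep $\dim\langle W\rangle$ constant so that $I_k$ is an irreducible $\mathbb{P}^{k-1}$-bundle and limits of spans are spans of limits --- is correctly identified and handled in your write-up.
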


\begin{proof}
The existence part is stated in \cite{bb2}, Lemma 1, which in turn is just an adaptation of some parts of the beautiful paper \cite{bgl} (\cite{bgl}, Lemma 2.1.6) or
of \cite{bgi}, Proposition 11. The uniqueness part is true
by Lemma \ref{a1} and the definition of the integer $\beta (C)$.
\end{proof}

\begin{lemma}\label{e7.1}
Let $X\subset \mathbb {P}^n$ be a linearly normal elliptic curve.

\quad (i) We have $\beta (X) =n$. A scheme $Z\subset X$ with $\deg (Z) = n+1$
is linearly independent if and only if $Z\notin \vert \mathcal {O}_X(1)\vert$. 

\quad (ii) Fix zero-dimensional schemes $A, B\subset X$ such that $\deg (A)+\deg (B)\le n+1$. If $\deg (A) +\deg (B) =n+1$ and $A+B \in |\mathcal {O}_X(1)|$,
assume $A\cap B \ne \emptyset$, i.e. assume $A\cup B \ne A+B$. Then $\langle A\rangle \cap \langle B\rangle = \langle A\cap B\rangle$.

\quad (iii) Fix zero-dimensional schemes $A, B\subset X$ such that $\mathcal {O}_X(A+B) \cong \mathcal {O}_X(1)$.
Then $\dim (\langle A\rangle \cap \langle B \rangle ) =\deg (A\cap B)$.
\end{lemma}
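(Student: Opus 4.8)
The plan is to push everything down to cohomology on the elliptic curve $X$ itself, using linear normality. First I record the dictionary. Since $X$ is a linearly normal elliptic curve, $\mathcal{O}_X(1)$ is a complete linear system of degree $n+1$ on a genus-$1$ curve, and for every zero-dimensional subscheme $Z\subset X$ (equivalently, every effective divisor on $X$) linear normality and non-degeneracy give $h^i(\mathbb{P}^n,\mathcal{I}_Z(1))=h^i(X,\mathcal{O}_X(1)(-Z))$ for $i=0,1$; this comes from the sequence $0\to\mathcal{I}_X(1)\to\mathcal{I}_Z(1)\to\mathcal{O}_X(1)(-Z)\to 0$ together with $h^0(\mathcal{I}_X(1))=h^1(\mathbb{P}^n,\mathcal{I}_X(1))=h^1(X,\mathcal{O}_X(1))=0$. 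Hence $Z$ is linearly independent if and only if $h^1(X,\mathcal{O}_X(1)(-Z))=0$, and by Serre duality on $X$ (where $\omega_X\cong\mathcal{O}_X$) this $h^1$ equals $h^0(X,\mathcal{O}_X(-1)(Z))$, the space of sections of a line bundle of degree $\deg(Z)-n-1$.

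Part (i) is now immediate. If $\deg(Z)\le n$ the bundle $\mathcal{O}_X(-1)(Z)$ has negative degree, so $h^0=0$ and $Z$ is linearly independent; thus $\beta(X)\ge n$. If $\deg(Z)=n+1$ the bundle has degree $0$, so $h^0\ne 0$ precisely when it is trivial, i.e. precisely when $Z\in|\mathcal{O}_X(1)|$; this gives the stated equivalence for degree $n+1$, and choosing any $Z\in|\mathcal{O}_X(1)|$ shows $\beta(X)\le n$, so $\beta(X)=n$.

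For (ii) and (iii) I would use the Grassmann formula together with the identity $\langle A\cup B\rangle=\langle A\rangle+\langle B\rangle$ (valid because $H^0(\mathcal{I}_{A\cup B}(1))=H^0(\mathcal{I}_A(1))\cap H^0(\mathcal{I}_B(1))$), giving $\dim\langle A\rangle+\dim\langle B\rangle=\dim\langle A\cup B\rangle+\dim(\langle A\rangle\cap\langle B\rangle)$, together with the trivial inclusion $\langle A\cap B\rangle\subseteq\langle A\rangle\cap\langle B\rangle$. Under the hypotheses of (ii) (discarding the degenerate case where $A$ or $B$ is empty) all of $A,B,A\cap B,A\cup B$ have degree $\le n$ except possibly $A\cup B$, whose degree can equal $n+1$ only if $\deg(A)+\deg(B)=n+1$ and $A\cap B=\emptyset$, so $A\cup B=A+B$; but then the extra hypothesis of (ii) forces $A+B\notin|\mathcal{O}_X(1)|$, so by (i) $A\cup B$ is still linearly independent. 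Thus in every case $\dim\langle A\rangle=\deg(A)-1$, $\dim\langle B\rangle=\deg(B)-1$, $\dim\langle A\cup B\rangle=\deg(A\cup B)-1$, and the Grassmann formula gives $\dim(\langle A\rangle\cap\langle B\rangle)=\deg(A)+\deg(B)-\deg(A\cup B)-1=\deg(A\cap B)-1$; since $\langle A\cap B\rangle$ has the same dimension (or is empty, when $A\cap B=\emptyset$) and sits inside $\langle A\rangle\cap\langle B\rangle$, the two coincide, which is (ii). (Alternatively one can get (ii) from Lemma \ref{a1}: if some $P\in\langle A\rangle\cap\langle B\rangle$ lay outside $\langle A\cap B\rangle$, choose minimal $A'\subseteq A$, $B'\subseteq B$ with $P\in\langle A'\rangle\cap\langle B'\rangle$; then $A'\ne B'$, forcing $h^1(\mathcal{I}_{A'\cup B'}(1))>0$, which contradicts that $A'\cup B'\subseteq A\cup B$ is linearly independent.)

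Finally, (iii) is the same computation in the one configuration excluded from (ii): here $\deg(A)+\deg(B)=n+1$, and the substantive case is $A\cap B=\emptyset$, so $A\cup B=A+B\in|\mathcal{O}_X(1)|$. By (i) this $A\cup B$ is \emph{not} linearly independent; more precisely $h^1(\mathbb{P}^n,\mathcal{I}_{A\cup B}(1))=h^0(X,\mathcal{O}_X)=1$, so $\dim\langle A\cup B\rangle=n-1$, and the Grassmann formula yields $\dim(\langle A\rangle\cap\langle B\rangle)=(\deg(A)-1)+(\deg(B)-1)-(n-1)=0=\deg(A\cap B)$: the spans are forced to meet in the single ``residual'' point produced by the linear dependence of the hyperplane section $A+B$. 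The case $A\cap B\ne\emptyset$ falls within the scope of part (ii), which already describes $\langle A\rangle\cap\langle B\rangle$ exactly. The one delicate point throughout is the borderline degree $n+1$: one must track whether $A\cup B$ lies in $|\mathcal{O}_X(1)|$ and invoke the precise criterion of part (i) rather than a bare ``degree $\le n$'' bound, and it is exactly this dichotomy that separates the expected behaviour in (ii) from the extra intersection point appearing in (iii).
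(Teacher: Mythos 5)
Your overall route --- linear normality plus the cohomology of line bundles on an elliptic curve (equivalently Serre duality) for (i), then the Grassmann formula via $\langle A\cup B\rangle=\langle A\rangle+\langle B\rangle$ for (ii) and (iii) --- is exactly the paper's, whose proof consists of precisely these two remarks; your treatments of (i) and (ii) are correct and complete, and more detailed than the original.

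The gap is in (iii), in the case $A\cap B\ne\emptyset$. You dispose of it by saying it ``falls within the scope of part (ii)'', but (ii) gives $\langle A\rangle\cap\langle B\rangle=\langle A\cap B\rangle$, whose dimension is $\deg(A\cap B)-1$, not the $\deg(A\cap B)$ asserted in (iii); your own Grassmann computation gives the same value, since $\deg(A\cup B)=n+1-\deg(A\cap B)\le n$ makes $A\cup B$ linearly independent, so $h^1(\mathcal{I}_{A\cup B}(1))=0$ and $\dim(\langle A\rangle\cap\langle B\rangle)=(\deg A-1)+(\deg B-1)-(\deg(A\cup B)-1)=\deg(A\cap B)-1$. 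Thus the equality claimed in (iii) holds exactly when $A\cap B=\emptyset$ (the case you do prove), and fails otherwise: for $n=3$ and $A=B$ a degree-$2$ divisor with $2A\in\vert\mathcal{O}_X(1)\vert$ the left side is $1$ and the right side is $2$. This is really a defect of the statement rather than of your strategy --- the paper's own one-line ``by the Grassmann formula'' has the same problem, and in its applications (e.g.\ Proposition \ref{f2} and Proposition \ref{i00}) part (iii) is invoked either in the disjoint situation or only to conclude that $\langle A\rangle\cap\langle B\rangle\ne\emptyset$ --- but a correct write-up must either add the hypothesis $A\cap B=\emptyset$ to (iii) or record the general formula $\dim(\langle A\rangle\cap\langle B\rangle)=\deg(A\cap B)-1+h^1(\mathcal{I}_{A\cup B}(1))$; deferring to (ii) as though it confirmed (iii) conceals the discrepancy instead of resolving it.
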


\begin{proof}
Let $F\subset X$ be a zero-dimensional subscheme. Since $X$ is projectively normal, we have
$h^1(\mathcal {I}_F(1)) =0$ if and only if either $\deg (F) < \deg (\mathcal {O}_X(1)) =n+1$ or $\deg (F) =n+1$
and $F\notin |\mathcal {O}_X(1)|$ (use the cohomology
of line bundles on an elliptic curve). Hence we get part (i). By the Grassmann formula we also get parts (ii) and (iii).
\end{proof}

\begin{lemma}\label{o3}
Let $X\subset \mathbb {P}^n$, $n \ge 3$, be a linearly normal elliptic curve. Fix
$P\in \mathbb {P}^n$. Then either $b_X(P) = r_X(P)$ or $r_X(P) +b_X(P)\ge n+1-b_X(P)$.
\end{lemma}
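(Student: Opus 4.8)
Let $w := b_X(P)$ and suppose $b_X(P) \ne r_X(P)$, so $r_X(P) > w$. Write $r := r_X(P)$. The plan is to produce two zero-dimensional schemes on $X$ whose union fails to be linearly independent, and then use the degree bound $\beta(X) = n$ from Lemma \ref{e7.1}(i) to force a lower bound on $r + w$.

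\textbf{Proof proposal.} First I would pick $Z \in \mathcal{Z}(X,P)$, so $\deg(Z) = w$, $Z$ is linearly independent, $P \in \langle Z\rangle$ and $P \notin \langle Z'\rangle$ for every $Z' \subsetneqq Z$. Next pick $S \in \mathcal{S}(X,P)$, so $\sharp(S) = r$, $S$ is linearly independent, $P \in \langle S\rangle$ and $P \notin \langle S'\rangle$ for every $S' \subsetneqq S$. Since $r > w$ we have $S \ne Z$ (they even have different degrees). Now I would invoke Lemma \ref{a1} with $Y = X$, $A = Z$, $B = S$: all hypotheses are exactly the minimality properties just recorded, so we conclude $h^1(\mathbb{P}^n, \mathcal{I}_{Z \cup S}(1)) > 0$.

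The key step is to convert this $h^1$-positivity into a numerical inequality. By the definition of $\beta(X)$ in Notation \ref{e7}, every zero-dimensional subscheme of $X$ of degree at most $\beta(X)$ is linearly independent, i.e. has vanishing $h^1$ of its twisted ideal sheaf; by Lemma \ref{e7.1}(i), $\beta(X) = n$. Hence $h^1(\mathcal{I}_{Z\cup S}(1)) > 0$ forces $\deg(Z \cup S) \ge n+1$. Since $Z \cup S$ is a subscheme of both $Z$... wait — more carefully, $\deg(Z\cup S) \le \deg(Z) + \deg(S) = w + r$, so $w + r \ge n+1$, which is exactly $r_X(P) + b_X(P) \ge n+1 \ge n+1 - b_X(P)$ (the last inequality using $b_X(P) \ge 0$, indeed $\ge 1$).

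\textbf{Where the subtlety lies.} The step I expect to need care is the passage $h^1(\mathcal{I}_{Z\cup S}(1)) > 0 \Rightarrow \deg(Z\cup S) \ge n+1$: this is where $\beta(X) = n$ is used, and one must make sure the reduced-union $Z \cup S$ (scheme-theoretic union, not the sum of degrees) is the relevant object and that $\deg(Z \cup S) \le w + r$, which holds trivially since $Z \cup S$ is contained in the scheme-theoretic sum. Also one should double-check the edge behaviour: if it happened that $r + w$ landed below $n+1$ we would have a contradiction with $h^1 > 0$, confirming that under $b_X(P) \ne r_X(P)$ we always land in the second alternative. I would also remark that the inequality actually obtained, $r_X(P) + b_X(P) \ge n+1$, is stronger than the stated $r_X(P) + b_X(P) \ge n+1 - b_X(P)$; the weaker form is presumably stated because it is the form used later (e.g. in deriving Theorem \ref{i1} for $n \ge 2w+2$). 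No further input — in particular none of the more delicate results of Lemma \ref{e7.1}(ii)–(iii) or Proposition \ref{e8} — seems necessary for this lemma; the whole argument is Lemma \ref{a1} plus the value of $\beta(X)$.
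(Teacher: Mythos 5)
Your proof is correct and follows essentially the same route as the paper: both arguments reduce to the observation that if $\deg(Z)+\sharp(S)\le n$ then $Z\cup S$ would be linearly independent (since $\beta(X)=n$), which is impossible for two distinct minimal schemes spanning $P$ --- you get the impossibility by quoting Lemma \ref{a1}, while the paper argues it directly via the Grassmann formula of Lemma \ref{e7.1} and the non-reducedness of the scheme evincing the border rank. As you note, what is actually obtained (in both your argument and the paper's) is the stronger inequality $r_X(P)+b_X(P)\ge n+1$, which is exactly the form invoked later in the proof of Theorem \ref{i1}.
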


\begin{proof}
Assume $b_X(P)<r_X(P)$. Fix $W$ evincing $b_X(P)$ and $S$ evincing $r_X(P)$. Assume $\sharp (S)+\deg (W)
\le n$. Hence $S\cup W$ is linearly independent (Lemma \ref{e7.1}), i.e. $\langle S\rangle \cap
\langle W\rangle = \langle W\cap S\rangle$. Since $S$ is reduced, while $W$ is not reduced,
$W\cap S \subsetneqq W$. Hence $b_X(P) \le \deg (W\cap S)<b_X(P)$, a contradiction.
\end{proof}

\begin{lemma}\label{o1}
Let $X\subset \mathbb {P}^n$, $n \ge 3$, be a linearly normal elliptic curve. Fix a positive integer $w$
such that $2w \le n+1$. Fix $P\in \mathbb {P}^n$
and assume the existence of a zero-dimensional scheme $Z\subset X$ such that $\deg (Z)=w$, $P\in \langle Z\rangle$,
while $P\notin \langle Z'\rangle $ for all $Z'\subsetneqq Z$. Then $b_X(P)= w$.\end{lemma}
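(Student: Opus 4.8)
The plan is to prove the two inequalities $b_X(P)\le w$ and $b_X(P)\ge w$ separately. For the first one I would use that $X$ is smooth, so the degree-$w$ scheme $Z$ is an effective divisor on $X$ and hence a flat limit of reduced effective divisors of degree $w$; since $w\le\lfloor(n+1)/2\rfloor\le n=\beta(X)$, all of these divisors (including $Z$) are linearly independent by Lemma \ref{e7.1}(i), so their linear spans form a flat family of $(w-1)$-planes whose general member is a genuine $w$-secant space of $X$. Thus $\langle Z\rangle$ is a limit of such secant spaces, so $\langle Z\rangle\subseteq\sigma_w(X)$ and in particular $P\in\sigma_w(X)$, i.e. $b_X(P)\le w$. (If $w=1$ this forces $P\in X$ and there is nothing to prove, so I assume $w\ge 2$ below.)

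For the reverse inequality I would argue by contradiction: suppose $t:=b_X(P)\le w-1$. From $2w\le n+1$ we get $t\le w-1\le\lfloor n/2\rfloor=\lfloor\beta(X)/2\rfloor$, so Proposition \ref{e8} applies to $P\in\sigma_t(X)\setminus\sigma_{t-1}(X)$ and produces a zero-dimensional scheme $Z_1\subset X$ with $\deg(Z_1)=t$, $P\in\langle Z_1\rangle$ and $P\notin\langle Z_1'\rangle$ for every $Z_1'\subsetneqq Z_1$. Since $\deg(Z_1)=t<w=\deg(Z)$, the schemes $Z$ and $Z_1$ are distinct, so Lemma \ref{a1} (applied with $A=Z$ and $B=Z_1$, whose minimality hypotheses hold by the assumption on $Z$ and by the choice of $Z_1$) gives $h^1(\mathbb{P}^n,\mathcal{I}_{Z\cup Z_1}(1))>0$. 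On the other hand $\deg(Z\cup Z_1)\le\deg(Z)+\deg(Z_1)\le w+(w-1)=2w-1\le n=\beta(X)$, so $Z\cup Z_1$ is linearly independent, i.e. $h^1(\mathbb{P}^n,\mathcal{I}_{Z\cup Z_1}(1))=0$ — a contradiction. Hence $b_X(P)\ge w$, and combining the two bounds, $b_X(P)=w$.

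The argument is essentially a bookkeeping combination of the preliminary results, so I do not expect a serious obstacle. The one step deserving a little care is the inclusion $\langle Z\rangle\subseteq\sigma_w(X)$ when $Z$ is non-reduced: one needs that, over this range of $w$, the incidence variety $\{(A,Q):A\in\mathrm{Hilb}^w(X),\ Q\in\langle A\rangle\}$ has the expected dimension $2w-1$ and surjects onto $\sigma_w(X)$, which is exactly what linear independence of every degree-$w$ subscheme of $X$ (Lemma \ref{e7.1}(i)) guarantees; this is already implicit in the dimension count of \cite{a}, Remark 1.6.
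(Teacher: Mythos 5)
Your proof is correct and follows essentially the same route as the paper: the lower bound is the same degree count $\deg (Z\cup Z_1)\le 2w-1\le n=\beta (X)$ combined with Proposition \ref{e8}, the only cosmetic difference being that you derive the contradiction from Lemma \ref{a1} while the paper uses the Grassmann-formula form of Lemma \ref{e7.1} to force $P\in \langle Z\cap Z_1\rangle$, contradicting minimality. Your explicit flat-limit/incidence-variety argument for $b_X(P)\le w$ carefully fills in a step the paper dispatches in one line (the ``converse part''), and it is correct as written.
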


\begin{proof}
Assume $b_X(P)<w$ and take a scheme $B\in \mathcal {Z}(X,P)$ (Proposition \ref{e8}). Hence $P\in \langle B\rangle$
and $\deg (B)\le w-1$. Since $\deg (Z)+\deg (B) \le n$,
$Z\cup B$ is linearly independent. Hence $\langle Z\rangle \cap \langle B\rangle = \langle Z\cap B\rangle$.
We have $P\in \langle Z\rangle \cap \langle B\rangle$. Since $\deg (B) < w$, we have $Z\cap B\subsetneqq Z$.
Hence $P\notin \langle Z\cap B\rangle$, a contradiction. The converse part follows from Proposition \ref{e8},
part (i) of Remark \ref{o1} and the inequality $2w \le n+1$. The last assertion follows from the first part using induction
on the integer $b_X(Q)$.
\end{proof}

\section{Proof of Theorem \ref{i1} and related results}\label{S3}

\begin{proposition}\label{f2}
Fix an integer $k\ge 1$, a linearly normal elliptic curve $C\subset \mathbb {P}^{2k+1}$ and $P\in \mathbb {P}^{2k+1}\setminus \sigma _k(C)$.

\quad (a) Either $\sharp (\mathcal {Z}(C,P)) \le 2$ or $\mathcal {Z}(C,P)$ is infinite. We have $Z_1\cap Z_2=\emptyset$
and  $\mathcal {O}_C(Z_1+Z_2) \cong \mathcal {O}_C(1)$ for any $Z_1,Z_2\in \mathcal {Z}(C,P)$ such that $Z_1\ne Z_2$.

\quad (b) If $\sharp (\mathcal {Z}(C,P)) \ne 2$, then $\mathcal {O}_C(2Z) \cong \mathcal {O}_C(1)$ for all $Z\in \mathcal {Z}(C,P)$.

\quad (c) If $\mathcal {Z}(C,P)$ is infinite, then its positive-dimensional part $\Gamma$ is irreducible and one-dimensional. Fix a general $Z\in  \Gamma$. Either $Z$ is reduced or there
is an integer $m \ge 2$ such that $Z=mS_1$ for a reduced $S_1\subset C$ such that $\sharp (S_1) =(k+1)/m$.

\quad (d) If $P$ is general, then $\sharp (\mathcal {Z}(C,P))=2$.
\end{proposition}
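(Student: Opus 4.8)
The plan is to reduce everything to one observation about a single square root $M$ of $\mathcal{O}_C(1)$ (a line bundle with $M^{\otimes 2}\cong\mathcal{O}_C(1)$; there are four). Write $P$ as a hyperplane $\Pi_P=H^0(\mathcal{I}_P(1))\subset H^0(\mathcal{O}_C(1))=H^0(M^{\otimes 2})$, pick $0\neq\ell_P\in H^0(M^{\otimes 2})^\vee$ with $\ker\ell_P=\Pi_P$, and form the symmetric bilinear form $B_P(\sigma,\tau)=\ell_P(\sigma\tau)$ on $H^0(M)$ (via the multiplication $H^0(M)\otimes H^0(M)\to H^0(M^{\otimes 2})$). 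For $Y=\mathrm{div}(\sigma)\in|M|$ the sections of $\mathcal{O}_C(1)$ vanishing on $Y$ are exactly $\sigma\cdot H^0(M)$, so $P\in\langle Y\rangle$ iff $\sigma\in\mathrm{rad}(B_P)$; hence the locus $\mathcal{F}_M(P):=\{Y\in|M|:P\in\langle Y\rangle\}$ equals $\mathbb{P}(\mathrm{rad}(B_P))=\mathrm{Sing}(Q_P)$ for the quadric $Q_P\subset\mathbb{P}^k=|M|$ defined by $B_P$, and is therefore a \emph{linear} subspace. (Throughout, $P\notin\sigma_k(C)$ forces $b_C(P)=k+1$, since $\dim\sigma_k(C)=2k-1<2k+1=\dim\sigma_{k+1}(C)$; and $P\notin\sigma_k(C)$ makes the condition ``$P\notin\langle Y'\rangle$ for $Y'\subsetneq Y$'' automatic for any $Y$ of degree $k+1$ with $P\in\langle Y\rangle$, so $\mathcal{Z}(C,P)\cap|M|=\mathcal{F}_M(P)$.)

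For part (a): if $Z_1\neq Z_2$ lie in $\mathcal{Z}(C,P)$, Lemma \ref{a1} gives $h^1(\mathcal{I}_{Z_1\cup Z_2}(1))>0$, and since $\deg(Z_1\cup Z_2)\le 2\deg Z_i=2k+2=n+1$, Lemma \ref{e7.1}(i) forces $\deg(Z_1\cup Z_2)=n+1$ and $Z_1\cup Z_2\in|\mathcal{O}_C(1)|$; as $\deg(Z_1\cup Z_2)=\deg Z_1+\deg Z_2$, the schemes are disjoint, whence $\mathcal{O}_C(Z_1+Z_2)\cong\mathcal{O}_C(1)$. If now $\sharp(\mathcal{Z}(C,P))\ge 3$, a one-line cocycle computation ($\mathcal{O}(Z_1+Z_2)\cong\mathcal{O}(1)\cong\mathcal{O}(Z_1+Z_3)$, etc.) shows all members are linearly equivalent, so all lie in the unique $|M|$ containing one of them, with $M^{\otimes 2}\cong\mathcal{O}_C(1)$; then $\mathcal{Z}(C,P)=\mathcal{F}_M(P)$ is a linear space with at least three points, hence of positive dimension, hence infinite — this is the dichotomy of (a). It also yields (b) in the case $\sharp\neq 2$ with $\sharp\ge 3$: there $\mathcal{O}_C(2Z)=M^{\otimes 2}\cong\mathcal{O}_C(1)$ for every $Z\in\mathcal{Z}(C,P)$.

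The remaining case of (b) is $\mathcal{Z}(C,P)=\{Z\}$. Suppose $\mathcal{O}_C(2Z)\not\cong\mathcal{O}_C(1)$ and set $M:=\mathcal{O}_C(1)(-Z)$, so $M\not\cong\mathcal{O}_C(Z)$ and $h^0(M)=h^0(\mathcal{O}_C(Z))=k+1$. Consider the pairing $\mathcal{B}\colon H^0(M)\times H^0(\mathcal{O}_C(Z))\to\mathbb{K}$, $\mathcal{B}(\sigma,u)=\ell_P(\sigma u)$, via $H^0(M)\otimes H^0(\mathcal{O}_C(Z))\to H^0(\mathcal{O}_C(1))$. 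Since $Z\in\mathcal{Z}(C,P)$, the canonical section of $\mathcal{O}_C(Z)$ lies in the right radical of $\mathcal{B}$; the form is therefore degenerate, so the left radical is nonzero, and a nonzero element $\sigma'$ of it has $\mathrm{div}(\sigma')=Z'\in|M|$ with $P\in\langle Z'\rangle$, hence $Z'\in\mathcal{Z}(C,P)$, while $Z'\neq Z$ because $\mathcal{O}_C(Z')=M\not\cong\mathcal{O}_C(Z)$ — contradicting $\sharp=1$. This proves (b). Part (d) then follows: the incidence variety $\{(Q,W):\deg W=k+1,\ Q\in\langle W\rangle\}$ is irreducible of dimension $2k+1$ and dominates $\mathbb{P}^{2k+1}$, so a general fibre is finite, giving $\sharp(\mathcal{Z}(C,P))\le 2$ by (a); by (b), $\sharp=1$ would force $P$ onto some $\langle Y\rangle$ with $\mathcal{O}_C(2Y)\cong\mathcal{O}_C(1)$, but the union of these over all $Y$ and the four square roots has dimension $\le 2k$; and $\sharp\neq 0$ because $\mathcal{Z}(C,P)\neq\emptyset$ always; hence $\sharp=2$.

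Finally part (c): if $\sharp(\mathcal{Z}(C,P))$ is infinite then $\mathcal{Z}(C,P)=\mathcal{F}_M(P)=\mathbb{P}(\mathrm{rad}(B_P))$ is a linear space $\mathbb{P}^m$ with $m\ge 1$, so $\Gamma=\mathbb{P}^m$ is irreducible, and one must show $m=1$, i.e. $\mathrm{corank}(B_P)\le 2$. Assuming $m\ge 2$, the subsystem $\mathbb{P}(\mathrm{rad}(B_P))\subset|M|$ has base locus $\mathfrak{b}$ with $\deg\mathfrak{b}\le k-m$ (from $h^0(M(-\mathfrak{b}))=m+1$). If $\deg\mathfrak{b}\ge 1$, a base‑point‑free pencil trick on the moving part (its cokernel $\hookrightarrow H^1(\mathcal{O}_C(\mathfrak{b}))=0$) gives $\bigcap_{D\in\mathbb{P}(\mathrm{rad}(B_P))}\langle D\rangle=\langle\mathfrak{b}\rangle$; since $P$ lies in this intersection, $b_C(P)\le\deg\mathfrak{b}\le k-1<k$, contradicting $P\notin\sigma_k(C)$. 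The delicate case is $\mathfrak{b}=\emptyset$: the same trick, now with cokernel in $H^1(\mathcal{O}_C)\cong\mathbb{K}$, forces $\mathfrak{p}\cdot H^0(M)=\Pi_P$ for \emph{every} base‑point‑free pencil $\mathfrak{p}\subset\mathrm{rad}(B_P)$, and this rigidity — compared across two such pencils, or after restricting at a general point of $C$ — must be shown incompatible with $\dim\mathrm{rad}(B_P)\ge 3$ unless $P\in\sigma_k(C)$. Granting $m=1$, $\Gamma$ is a pencil in $|M|$, and the reduced/$mS_1$ dichotomy for a general $Z\in\Gamma$ comes from Bertini in characteristic $0$ (the general member of a pencil is reduced off the base locus) together with an analysis of the fixed part of the pencil. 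The main obstacle is exactly this last point: ruling out a base‑point‑free at least three‑dimensional family of $(k+1)$‑secant $k$‑planes through a point off $\sigma_k(C)$; everything else is a formal consequence of the linearity of $\mathcal{F}_M(P)=\mathrm{Sing}(Q_P)$, of Lemma \ref{a1}, and of the cocycle and bilinear‑form manipulations.
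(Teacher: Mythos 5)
Your reduction to the bilinear form $B_P(\sigma,\tau)=\ell_P(\sigma\tau)$ is sound and gives a genuinely different, and in places cleaner, treatment of (a), (b) and (d): the identification of $\mathcal{F}_M(P)$ with $\mathbb{P}(\mathrm{rad}(B_P))$, hence a linear space, replaces the paper's appeal to the degree-two generically finite join map and the normality of $\mathbb{P}^{2k+1}$ for the ``$\le 2$ or infinite'' dichotomy, and your left/right-radical argument in the case $\sharp(\mathcal{Z}(C,P))=1$ replaces the paper's completeness/dimension argument with the incidence set $J\subset\langle Z\rangle\times\vert\mathcal{O}_C(1)(-Z)\vert$. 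Those parts are correct, modulo the standard facts you invoke ($\langle Y'\rangle\subset\sigma_{\deg Y'}(C)$ for schemes of degree $\le k$, and $\mathcal{Z}(C,P)\ne\emptyset$), and your dimension count for (d) is essentially the paper's.

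Part (c), however, has a genuine gap, which you yourself flag: you do not prove $\dim\mathbb{P}(\mathrm{rad}(B_P))=1$ in the base-point-free case, and you leave the reduced/$mS_1$ dichotomy to an unspecified ``analysis of the fixed part.'' The missing idea is exactly the one the paper uses, and you have already proved it in your part (a): any two distinct members of $\mathcal{Z}(C,P)$ are disjoint. If $m:=\dim\mathbb{P}(\mathrm{rad}(B_P))\ge 2$, pick any member $D_0$ and any point $A$ of its support; vanishing at $A$ is a single linear condition on $\mathrm{rad}(B_P)$, so the members containing $A$ form a subfamily of dimension $\ge m-1\ge 1$, hence there is a member $D_1\ne D_0$ with $A\in D_0\cap D_1$, contradicting disjointness. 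The same remark shows your subcase $\mathfrak{b}\ne\emptyset$ is vacuous (a base point lies on two distinct members), so that detour is unnecessary. Thus $m=1$, $\Gamma$ is an irreducible base-point-free pencil, and in characteristic zero Bertini --- or, as in the paper, \cite{cc1}, Proposition 5.8, applied to the base-point-free involution $\Gamma$ --- gives that the general member is reduced, so the dichotomy of (c) holds a fortiori. In short, the ``main obstacle'' you name, a base-point-free family of dimension at least two of $(k+1)$-secant $k$-planes through $P$, is ruled out in one line by the disjointness in (a); this is precisely how the paper obtains $\dim\Gamma=1$ and the irreducibility of $\Gamma$ (through a general point of $C$ there passes at most one member of the family).
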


\begin{proof}
Since no non-degenerate
curve is defective (\cite{a}, Remark 1.6), we have $\sigma _{k+1}(C)=
\mathbb {P}^{2k+1}$ and $\dim (\sigma _k(C)) =2k-1$. Hence $b_C(P) = k+1$. Proposition \ref{e8} and part (i) of Lemma \ref{e7.1} give
$\mathcal {Z}(C,P) \ne \emptyset$. Fix $Z_1, Z_2\in \mathcal {Z}(C,P)$ such that $Z_1\ne Z_2$. Parts (ii) and (iii) of Lemma \ref{e7.1}
give $\mathcal {O}_C(Z_1+Z_2) \cong \mathcal {O}_C(1)$ and $Z_1\cap Z_2 = \emptyset$, proving part (a).

\quad (i) Let $J(C,\dots
,C) \subset C^{k+1}\times
\mathbb {P}^{2k+1}$ be the abstract join of $k+1$ copies of $C$, i.e. the closure in $C^{k+1}\times \mathbb {P}^{2k+1}$
of the set of all $(P_1,\dots ,P_{k+1},P)$ such that $P_i\ne P_j$ for all $i\ne j$, the set $\{P_1,\dots,P_{k+1}\}$ is linearly
independent and $P\in \langle \{P_1,\dots ,P_{k+1}\}\rangle$. Since $\sigma _{k+1}(C)=
\mathbb {P}^{2k+1}$, for a general $P$ the set $\mathcal {Z}(C,P)$ is finite and its cardinality
is the degree of the generically finite surjection $J(C,\dots
,C)\to \mathbb {P}^{2k+1}$ induced by the projection $C^{k+1}\times
\mathbb {P}^{2k+1} \to
\mathbb {P}^{2k+1}$. Assume the existence of schemes
$Z_1, Z_2,Z_3
\in \mathcal {Z}(C,P)$ such that $Z_i\ne Z_j$ for all $i\ne j$. Part (a)
gives $Z_i\cap Z_j = \emptyset$ and $\mathcal {O}_C(Z_i+Z_j) \cong \mathcal {O}_C(1)$ for all $i\ne j$. Taking $i=1$ and $j\in \{2,3\}$
we get $\mathcal {O}_C(Z_2) \cong \mathcal {O}_C(Z_3)$. By symmetry we get $\mathcal {O}_C(Z) \cong \mathcal {O}_C(Z_1)$ for
all $Z\in \mathcal {Z}(C,P)$. Since $\mathcal {O}_C(Z_1+Z_2) \cong \mathcal {O}_C(1)$, we also get $\mathcal {O}_C(2Z)\cong \mathcal {O}_C(1)$ for
all $Z\in \mathcal {Z}(C,P)$.

\quad (ii) Now assume $\sharp (\mathcal {Z}(C,P))=1$, say $\mathcal {Z}(X,P) = \{Z\}$. Fix any $E\in \vert \mathcal {O}_C(1)(-Z)\vert$.
Since $E+Z$ is contained in a hyperplane, we have $\langle Z\rangle \cap \langle E\rangle \ne \emptyset$. Part (iii) of Lemma \ref{e7.1} gives $\dim (\langle Z\rangle \cap \langle E\rangle )=\deg (Z\cap E)$. Set $J:= \{(Q,E)\in
\langle Z\rangle \times \vert \mathcal {O}_C(1)(-Z)\vert :Q\in \langle E\rangle \}$. We just saw that $J$ is a complete
projective set. For dimensional reasons the projection of $\langle Z\rangle \times \vert \mathcal {O}_C(1)(-Z)\vert$ into its first factor induces a dominant morphism
$u: J \to \langle Z\rangle$. Since $J$ is complete, there is $E\in \vert \mathcal {O}_C(1)(-Z)\vert$
such that $u(E)=Z$. The uniqueness of $Z$ gives $E=Z$. Hence $2Z\in \vert \mathcal {O}_C(1)\vert$.
Since the set of all $Z\subset X$ such that $2Z \in \vert \mathcal {O}_C(1)\vert$ has dimension $k+1$,
we get $\sharp (\mathcal {Z}(C,P))=2$ for a general $P$, proving part (d). Since this integer, two, is the degree
of a generically finite surjection $\gamma : J(C,\dots ,C)\to \mathbb {P}^{2k+1}$ and $\mathbb {P}^{2k+1}$ is a normal variety, each fiber of $\gamma$
is either infinite or with cardinality $\le 2$. Therefore either
$\sharp (\mathcal {Z}(C,P))\le 2$ or $\mathcal {Z}(C,P)$ is infinite.

\quad (iii) Now assume that $\mathcal {Z}(C,P)$ is infinite. Since any two different elements of $\mathcal {Z}(C,P)$ are disjoint (see step (i)), for a general $A\in C$ there is at most
one element of $\Gamma$ containing $A$. Hence $\dim (\Gamma )=1$ and $\Gamma$ is irreducible. Since
a general point of $C$ is contained in a unique element of $\Gamma$, the algebraic family $\Gamma$ of effective
divisors of $C$ is a so-called {\it involution} (\cite{cc1}, \S 5). Since any two elements of $\Gamma$ are
disjoint, this involution has no base points. Let $Z$ be a general element of $\Gamma$. Either $Z$ is reduced
or there is an integer $m\ge 2$ such that $Z = mS$ with $S$ reduced (\cite{cc1}, Proposition 5.8), concluding the proof of part (c).
\end{proof}

\vspace{0.3cm}

\qquad {\emph {Proof of Theorem \ref{i1}.}} For any integer $k>0$ such that $\sigma _{k-1}(X)\ne \mathbb {P}^n$, we have
$r_X(Q) = k$ for a general $Q\in \sigma _k(X)$. Hence for arbitrary $w \le \lfloor (n+2)/2\rfloor$ there are points $P$ such that $r_X(P) = b_X(P) =w$.
Fix $w \le -1 +n/2$, $P$ and $W$ such that $b_X(P)=w$, and $r_X(P)>w$. Lemma \ref{o3} gives $r_X(P) \ge n+1-w$. Hence to prove Theorem \ref{i1} it
is sufficient to prove that $r_X(P) = n+1-w$. Fix $W\in \mathcal {Z}(X,P)$. Set
$\mathcal {B}:= \{Z+W\}_{Z\in \vert \mathcal {O}_X(1)(-2W)\vert }$. Hence $\mathcal {B} := \{B\in \vert \mathcal
{O}_X(1)(-W)\vert :W \subset B\}$. Set $\mathcal {S}:=
\{Z\in \vert \mathcal {O}_X(1)(-W)\vert :P\in \langle Z\rangle \}$. Since $\deg (\mathcal {O}_X(1)(-W)) = n+1-w \le n$,
every element of $\vert \mathcal {O}_X(1)(-W)\vert$ is linearly independent. However, in the definition
of the set $\mathcal {S}$ we did not prescribed
that $P\notin \langle Z'\rangle$ for all $Z'\subsetneqq Z$. Hence
$\mathcal {B} \subseteq \mathcal {S}$. Part (i) of
of Lemma \ref{e7.1} and the inequality $r_X(P)\ge n+1-w$ give that $r_X(P)=n+1-w$ if and only if there is a reduced $S\in
\mathcal {S}$.

\quad (a) In this step we prove that $\mathcal {B} \ne \mathcal {S}$. Fix a general subset $E\subset X$ such that
$\sharp (E) = n-2w-1$. Since $n>2w+1$, we have $E\ne \emptyset$. Hence for a general
$E$ the degree $w$ line bundles $\mathcal {O}_X(W)$ and $\mathcal {O}_X(1)(-W-E)$ are
not isomorphic. Hence to get $\mathcal {B} \ne \mathcal {S}$ it
is sufficient to prove the existence of a degree $w$ zero-dimensional subscheme $A_E$ of $X$ such that $E+A_E\in \mathcal {S}$.
Let $\ell _{\langle E\rangle }: \mathbb {P}^n\setminus \langle E\rangle\to \mathbb {P}^{2w+1}$ denote the
linear projection from $\langle E\rangle$. Call $X_E\subset \mathbb {P}^{2w+1}$ the closure of $\ell _{\langle E\rangle }\vert (X\setminus \langle E\rangle \cap X)$
in $\mathbb {P}^{2w+1}$. Since $X$ is non-degenerate, $X_E$ spans $\mathbb {P}^{2w+1}$.
Since $X$ is a smooth curve, the rational map $\ell _{\langle E\rangle }\vert (X\setminus \langle E\rangle \cap X)$
extends to a surjective morphism $\psi : X \to X_E$. Since every degree
$n-2w+1$ zero-dimensional subscheme of $X$ is linearly independent, $E$ is the scheme-theoretic intersection of $X$ with $\langle
E\rangle$. Hence $\deg
(X_E)\cdot
\deg (\psi )=deg (X) -
\deg (E) = n+1 -n+2w+1 = 2w+2$. Hence
$\deg (X_E)=2w+2$ and $\deg (\psi )=1$. Since $\deg (\psi )=1$, $X_E$ and $X$ are birational. Hence $X_E$ is a linearly normal
elliptic curve. Since $X$ and $X_E$ are smooth curves, $\psi$ is an isomorphism. Since $\langle E\rangle \cap X
=E$ (as schemes),
we have $\psi ^\ast (\mathcal {O}_{X_E}(1)) \cong \mathcal {O}_X(1)(-E)$. Set $W':= \psi (W)$. For
a general $E$ we may assume $E\cap W=\emptyset$. Hence $W'$ is a degree $w$ subscheme of $X_E$ isomorphic as
an abstract scheme to $W$. Hence $W'$ is not reduced. Fix $W_1\subsetneqq W'$ and call $W_2$ the only subscheme
of $W$ such that $\psi (W_2) = W_1$. Since $W'$ is linearly independent, $\ell _{\langle E\rangle }\vert
\langle W\rangle \to \langle W' \rangle$ is an isomorphism. Since $\ell _{\langle E\rangle }\vert W =\psi \vert W$
is an isomorphism onto $W'$ and $P\notin \langle W_2\rangle$, we get $\ell _{\langle E\rangle }(P) \notin \langle W_1\rangle$.
Since this is true for all $W_1 \subsetneqq W$, Lemma \ref{o1} gives that $W'$ evinces the border $X_E$-rank of the
point $\ell _{\langle E\rangle }(P)$. Our choice of $E$ implies $\mathcal {O}_{X_E}(2W') \ne \mathcal {O}_{X_E}(1)$.
Hence part (b) of Proposition \ref{f2} gives the existence of a unique scheme
$A\subset X_E$ such that
$A\ne W'$ and
$\ell _{\langle E\rangle}(P) \in \langle A\rangle$. Set $A_E:= \psi ^{-1}(A)$. Since $E\cap W=\emptyset$ and $\deg (A_E) =\deg
(W)$, to prove
$E+A_E\notin \mathcal {B}$ it is sufficient to prove $A_E \ne W$, i.e. (since $\psi$ is an isomorphism) $W'\ne A$. We chose
$A\ne W$. Call $X[n-2w-1]$ the set of all $E$ for which $E+A_E$ is defined. 

\quad (b) Let $\Gamma \subseteq \mathcal {S}$ be any irreducible component of $\mathcal {S}$ containing
the irreducible algebraic family $\{E+A_E\}_{E\in X[n-2w-1]}$ constructed in step (a). Let $F$ be a general element
of $\Gamma$. Remember that to prove $r_X(P)=n+1-w$ it is sufficient to find a reduced
$S\in \Gamma$. $\Gamma$ is an irreducible algebraic family of divisors of $X$. We have $\dim (\Gamma ) =n-2w-1$. By
construction for a general $E\subset X$ such that $\sharp (E)=n-2w-1$ there is $B_E\in \Gamma$ such that $E \subset B_E$. For
general $E$ we have $\langle E\rangle \cap \langle W\rangle =\emptyset$. Since $P \notin \langle E\rangle $, the scheme $\ell
_{\langle E\rangle }(W)$ is isomorphic to $W$, $P\in \langle \ell _{\langle E\rangle }(W)\rangle$ and $P\notin \langle
W'\rangle$ for any $W'\subsetneqq \ell _{\langle E\rangle }(W)$. Lemma \ref{f2} gives $\ell _{\langle E\rangle }(P)\notin
\sigma _k(X_E)$ for general $E$. For general $E$ the degree $2k+2$ line bundles $\mathcal {O}_X(2W)$ and $\mathcal
{O}_X(1)(-E)$ are not isomorphic. Hence part (b) of Proposition \ref{f2} applied to the curve $X_E$, the point $\ell _{\langle
E\rangle }(P)$ and the scheme $Z:= \ell _{\langle E\rangle }(W)$ gives that such a divisor $B_E$ is unique. Hence $\Gamma$ is an
involution in the classical terminology (\cite{cc1}, \S 5). Assume for the moment that $\Gamma$ has no fixed component. We get
that either $F$ is reduced (and hence parts (i) and (ii) of Theorem \ref{i1} are proved for $P$) or there is an integer $m\ge
2$ such that each connected component of $F$ appears with multiplicity $m$ (\cite{cc1}, Proposition 5.8). Since $F = E+A_E$
with
$E$ reduced and
$\sharp (E)>\deg (A_E)$ this is obviously false. Hence we may assume that $\Gamma$ has a base locus. Call $D$ the base locus of
$\Gamma$. Hence the irreducible algebraic family $\Gamma (-D)$ of effective divisors of $X$ has the same dimension and it is
base point free. We have
$F = D+F'$ with $F'$ general in $\Gamma (-D)$. Since $\Gamma (-D)$ is an involution without base points and whose general
member has at least one reduced connected component (a connected component of $E$), its general member $F'$ is reduced
(\cite{cc1}, Proposition 5.8). Since $D$ has finite support and $F'$ is general, we also have $F'\cap D=\emptyset$. Fix
$O\in D_{red}$. We have $O\notin \langle W\rangle$, because $\deg (W\cup \{O\}) =w+1$ and every degree $w+1$ subscheme of $X$
is linearly independent. Let $E_1$
be the union of
$O$ and
$n-2w-2$ general points of
$X$
(if $n=2w+2$, then $E_1= \{O\}$). Since $O\notin \langle W\rangle$ and $X$ is non-degenerate,
we have $\langle W\rangle \cap \langle E_1\rangle =\emptyset$. Hence the point $\ell _{\langle E_1\rangle}(P)$ is contained
in the linear span of the degree $w$ subscheme $\ell _{\langle E_1\rangle}(W)$ of the linearly normal elliptic curve
$X_{E_1}\subset \mathbb {P}^{2w+2}$, but not in the linear span of any proper subscheme of it.
Since any degree $2w+1$ subscheme of $X_{E_1}$ is linearly independent, we get
$b_{X_{E_1}}(\ell _{\langle E_1\rangle}(P)) = w+1$. Since $O$ is a base point of $\Gamma$, we also get
a one-dimensional family $\Gamma '$ of distinct degree $w+1$ subschemes of $X_{E_1}$ such that
$\ell _{\langle E_1\rangle}(P)$ is in the linear span of each of it. Part (a) of Proposition \ref{f2} gives that these
schemes are pairwise disjoint. Hence $\deg (D)=1$ and $D = \{O\}$ (as schemes). Since $E+A_E$ has at least $\deg (E_1)$
points with multiplicity one, at least one connected component of the general element $F'$ of $\Gamma '$ is reduced. Since
$F'$ is a general element of the base point free involution $\Gamma (-D)$, $F'$ is reduced (\cite{cc1}, Proposition 5.8).
Since
any degree $n$ divisor of $X$ is linearly independent, we have
$\langle E_1\rangle \cap X=E_1$ (scheme-theoretic intersection). Since $\Gamma '$ has no base points, we may also
assume that
$F' \cap (X_{E_1}\setminus \ell _{\langle E_1\rangle}(X\setminus E_1)) =\emptyset$.
Hence the counterimage $F''$ of $F'$ in $X$ is disjoint from $E_1$. Hence $F'' \cup E_1$ is reduced.
Since $P\in \langle F'' \cup E_1\rangle$, we get $r_X(P)\le n+1-w$.\qed

\begin{proposition}\label{i00}
Fix an integer $k\ge 1$ and a linearly normal elliptic curve $X\subset \mathbb {P}^{2k+1}$. Then there
are $Q, P\in \mathbb {P}^{2k+1}$ such that $b_X(Q)=b_X(P)=r_X(Q)=k+1$ and $r_X(P)\ge k+2$. The set of all such points $Q$
contains a non-empty open subset of $\mathbb {P}^{2k+1}$, while the set of all such points $P$ contains a non-empty
algebraic subset of codimension $2$ of $\mathbb {P}^{2k+1}$.
\end{proposition}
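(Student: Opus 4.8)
\emph{Plan.} Write $n=2k+1$. Since $\deg X=2k+2$ and no non-degenerate curve is defective, $\sigma_{k+1}(X)=\mathbb P^n$ and $\dim\sigma_k(X)=2k-1$, so $b_X(Q)=k+1$ for every $Q\in\mathbb P^n\setminus\sigma_k(X)$. For the assertion on $Q$ it suffices to recall (introduction) that $r_X=b_X$ on a non-empty open subset of $\sigma_{k+1}(X)\setminus\sigma_k(X)=\mathbb P^n\setminus\sigma_k(X)$; equivalently, by Proposition \ref{f2}(d) a general $Q$ has $\sharp(\mathcal Z(X,Q))=2$, and since the generic point of the abstract join $J(C,\dots,C)$ has pairwise distinct coordinates, for $Q$ outside a proper closed set both elements of $\mathcal Z(X,Q)$ are reduced, so $r_X(Q)=k+1=b_X(Q)$.

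For the assertion on $P$ the first remark is that \emph{if $\mathcal Z(X,P)=\{Z_0\}$ with $Z_0$ non-reduced then $r_X(P)\ge k+2$}: otherwise $r_X(P)=k+1=b_X(P)$, so there is a reduced $S\subset X$ with $\sharp(S)=k+1$ and $P\in\langle S\rangle$; since $b_X(P)=k+1$ forces $P\notin\langle S'\rangle$ for every $S'\subsetneq S$, we get $S\in\mathcal Z(X,P)$, so $S=Z_0$, contradicting that $Z_0$ is not reduced. Hence it suffices to produce a non-empty codimension-$2$ algebraic subset of $\mathbb P^n$ consisting of such $P$. I treat $k\ge 2$; the case $k=1$ ($n=3$) is covered by \cite{p}. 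Fix $L$ with $L^{\otimes2}\cong\mathcal O_X(1)$ (four such $L$ exist). Since $\deg L=k+1\ge 3$, $\phi_{|L|}$ embeds $X$ as an elliptic normal curve $X'\subset\mathbb P^k=\mathbb P(U)$, $\dim U=k+1$, and $\mathrm{Sym}^2H^0(L)\to H^0(L^{\otimes2})$ is surjective; dualizing, the linear inclusion $\mathbb P^n=\langle X\rangle\hookrightarrow\mathbb P(\mathrm{Sym}^2U)$ identifies $X$ with $v_2(X')$. Under this identification each $Z_0\in|L|$ is a hyperplane section $X'\cap\mathbb P(U')$ with $U'\subset U$ of codimension $1$, and $\langle Z_0\rangle=\langle v_2(X'\cap\mathbb P(U'))\rangle\subseteq\mathbb P(\mathrm{Sym}^2U')$.

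Writing $\mathrm{mrk}$ for the rank of a symmetric tensor, I record two consequences. (i) A general $P\in\langle Z_0\rangle$ has $\mathrm{mrk}(P)=k$: for general $Z_0\in\Sigma_L$ (see below), $k$ of the distinct points of $Z_0$ span $\mathbb P(U')$, so the sum of their $v_2$-images is a point of $\langle Z_0\rangle$ of rank $k$, while $\mathbb P(\mathrm{Sym}^2U')$ contains only forms of rank $\le k$. (ii) For distinct $Z_0,Z^*\in|L|$ with $Z_0\cap Z^*=\emptyset$, the intersection $\langle Z_0\rangle\cap\langle Z^*\rangle$ is a single point (Lemma \ref{e7.1}(iii)) lying in $\mathbb P(\mathrm{Sym}^2U')\cap\mathbb P(\mathrm{Sym}^2U'')=\mathbb P(\mathrm{Sym}^2(U'\cap U''))$, hence of rank $\le k-1$ since $\dim(U'\cap U'')=k-1$. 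Now let $\Sigma_L\subset|L|$ be the locus of non-reduced members; it is the dual variety of $X'$, hence non-empty, irreducible of dimension $k-1$. Set $\mathcal W_L:=\overline{\bigcup_{Z_0\in\Sigma_L}\langle Z_0\rangle}$.

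I claim that for general $Z_0\in\Sigma_L$ and general $P\in\langle Z_0\rangle$ one has $\mathcal Z(X,P)=\{Z_0\}$. By genericity $P\notin\langle W\rangle$ for all $W\subsetneq Z_0$ and $\mathrm{mrk}(P)=k$ by (i); the first condition and Lemma \ref{o1} give $b_X(P)=k+1$ and $Z_0\in\mathcal Z(X,P)$. If $Z^*\in\mathcal Z(X,P)$ with $Z^*\ne Z_0$, then by Proposition \ref{f2}(a) $Z^*\cap Z_0=\emptyset$ and $\mathcal O_X(Z_0+Z^*)\cong\mathcal O_X(1)\cong L^{\otimes2}$, so $Z^*\in|L|$, and then $P\in\langle Z_0\rangle\cap\langle Z^*\rangle$ would have $\mathrm{mrk}(P)\le k-1$ by (ii), a contradiction; the same reasoning excludes $\mathcal Z(X,P)$ infinite, since its elements would all carry the bundle $L$, forcing infinitely many members of $|L|$ through $P$, impossible once $\mathrm{mrk}(P)=k$ pins down $U'$. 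Hence $\mathcal Z(X,P)=\{Z_0\}$ with $Z_0$ non-reduced, so $r_X(P)\ge k+2$ by the reduction above. Since the incidence $\{(Z_0,P):Z_0\in\Sigma_L,\ P\in\langle Z_0\rangle\}$ has dimension $(k-1)+k=2k-1$ and is generically finite onto $\mathcal W_L$ (a general $P\in\mathcal W_L$ has $\mathrm{mrk}(P)=k$, so $U'$ and hence $Z_0$ are unique), $\mathcal W_L$ is a non-empty algebraic subset of $\mathbb P^n$ of codimension $2$ contained in $\{r_X\ge k+2\}$, as wanted. The delicate point is precisely this last claim: without an extra input the $k$-dimensional family $\{\langle Z_0\rangle\cap\langle Z^*\rangle\}_{Z^*\in|L|}$ could sweep out all of $\langle Z_0\rangle\cong\mathbb P^k$, and then a general $P\in\langle Z_0\rangle$ would also lie on a reduced $\langle Z^*\rangle$ and have rank $k+1$. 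The rank estimate (ii) — the Veronese reformulation made available by $\mathcal O_X(1)\cong L^{\otimes2}$ — is exactly what rules this out; everything else is dimension counting together with Proposition \ref{f2}(a)–(b).
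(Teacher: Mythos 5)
Your construction of the codimension-two family is correct in substance but genuinely different from the paper's. The paper takes \emph{two} disjoint non-reduced schemes $Z_1,Z_2$ of degree $k+1$ with $Z_1+Z_2\in|\mathcal O_X(1)|$ and $2Z_1\notin|\mathcal O_X(1)|$, puts $P=\langle Z_1\rangle\cap\langle Z_2\rangle$, and deduces from Proposition \ref{f2}(b) (after excluding $b_X(P)\le k$ by the Lemma \ref{a1}/Lemma \ref{e7.1} argument) that $\mathcal Z(X,P)=\{Z_1,Z_2\}$ with both elements non-reduced; the dimension count ($Z_1$ in a $k$-dimensional family, $Z_2$ in a $(k-1)$-dimensional one) gives codimension two. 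You instead take a \emph{single} non-reduced $Z_0$ with $2Z_0\in|\mathcal O_X(1)|$, i.e. $Z_0\in|L|$ with $L^{\otimes2}\cong\mathcal O_X(1)$, and a general $P\in\langle Z_0\rangle$, and you kill any second element of $\mathcal Z(X,P)$ by Proposition \ref{f2}(a) plus the quadratic-form rank argument coming from the identification $X=v_2(X')$ inside $\mathbb P(\mathrm{Sym}^2H^0(L)^*)$: a second $Z^*$ would lie in $|L|$, forcing $P\in\mathbb P(\mathrm{Sym}^2(U'\cap U''))$ and hence $\mathrm{mrk}(P)\le k-1$, against the generic value $k$. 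This is a nice alternative: it lands in the stratum $\sharp(\mathcal Z(X,P))=1$ rather than $2$, it gives the extra information that these $P$ have a unique minimal spanning scheme, and the same rank argument yields the generic finiteness needed for the dimension count. What the paper's route buys is uniformity and economy: it needs no square root of $\mathcal O_X(1)$, no projective normality of $X'$, and it covers $k=1$, whereas your argument requires $\deg L=k+1\ge3$ and you must defer $k=1$ to \cite{p} (acceptable, since the paper itself asserts the $n=3$ case is contained there, but your construction genuinely breaks down there). Two small wording points: your last sentence claims the closed set $\mathcal W_L$ is contained in $\{r_X\ge k+2\}$, while what you prove (and all that is needed) is that a dense constructible subset of the $(2k-1)$-dimensional $\mathcal W_L$ consists of points with $b_X=k+1$ and $r_X\ge k+2$; and for the $Q$-statement the general-point argument alone suffices, as in the paper.
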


\begin{proof}
Since $\sigma _{k+1}(X) = \mathbb {P}^{2k+1}$, while $\dim (\sigma _k(X))=2k-1$ (\cite{a}, Remark 1.6), we may take as $Q$ a
general point of $\mathbb {P}^{2k+1}$.
Now we prove the existence of points
$P\in
\mathbb {P}^n$ such that
$r_X(P) >b_X(P)=k+1$ and that the set of all $P$ such that $b_X(P)=k+1<r_X(P)$ contains a codimension $2$ subset of $\mathbb
{P}^{2k+1}$. Let
$\mathcal {U}$
be the set of all degree $k+1$ schemes $Z_1 \subset X$ such that $Z_1$ is non-reduced
and $2Z_1\notin \vert \mathcal {O}_X(1)\vert$. The set $\mathcal {U}$
is a quasi-projective integral variety of dimension $k+1$.
Fix any $Z_1\in \mathcal {U}$. Let $\mathcal {V}(Z_1)$
denote the set of all non-reduced $Z_2\in \vert \mathcal {O}_X(1)(-Z_1)\vert$ such that
$Z_2\cap Z_1=\emptyset$. The set $\mathcal {V}(Z_1)$
is a quasi-projective and integral variety of dimension $k$. Since
$Z_1\cap Z_2=\emptyset$, Remark
\ref{e7.1} shows that $\langle Z_1\rangle \cap \langle
Z_2\rangle$ is a single point, $Q$. If $b_X(Q)=k+1$, then $\mathcal {Z}(X,Q) = \{Z_1,Z_2\}$,
because
$\mathcal {O}_X(2Z_1)\ne
\mathcal {O}_X(1)$ (Part (b) of Proposition \ref{f2}). Since neither $Z_1$ nor $Z_2$ is reduced, we get $r_X(Q) >k+1$.
Varying
$Z_2$ for a fixed
$Z_1$ the set of all points $Q$ obtained in this way covers a non-empty open subset of an irreducible hypersurface of $\langle
Z_1\rangle$. Assume $b_X(Q)\le k$
and fix $W\in \mathcal {Z}(X,Q)$. Notice that
$P \notin \langle W'\rangle $ for any $W'\subsetneqq W$. Since $\deg (W)+\deg (Z_1) \le n$, Lemma \ref{a1}
and Lemma \ref{e7.1} give the existence of $Z'\subsetneqq Z$ such that $Q\in \langle Z'\rangle$. Iterating
the trick taking $Z'$ and $W$ instead of $Z_1$ and $W$ we get $W\subseteqq Z'$ and hence $W\subset Z_1$
Making this
construction using $Z_2$ and $W$ we get $W\subsetneqq Z_2$. Since $Z_1\cap Z_2=\emptyset$, we obtained a contradiction.
\end{proof}

\section{Proof of Theorem \ref{oo1}}

\qquad {\emph {Proof of Theorem \ref{oo1}:}} Since $X$ is linearly normal, for any zero-dimensional scheme $Z\subset X$ we have $h^1(\mathbb {P}^n,\mathcal {I}_Z(1)) = h^1(X,\mathcal {O}_X(1)(-Z))$.
Hence $h^1(\mathbb {P}^n,\mathcal {I}_Z(1)) >0$ if and only if either $\deg (Z) \ge n+2$ or $\deg (Z) =n+1$ and $Z\in |\mathcal {O}_X(1)|$. Assume the existence of the set $F$. Since $F\cap W =\emptyset$ and $P\in \langle W\rangle \cap \langle F\rangle$, we have $h^1(\mathcal {I}_{W\cup F}(1)) >0$ (Lemma \ref{a1}). Since $\deg (F\cup W)\le n$, we
get a contradiction. Define $\mathcal {B}$ and $\mathcal {S}$ as in the proof of Theorem \ref{i1}. Step (a) of the quoted proof works with no modification. At the end
of that step we defined $X[n-2w-1]$ and now we continue the proof of Theorem \ref{oo1} in the following way.

\quad (b) Let $\Gamma \subseteq \mathcal {S}$ be any irreducible component of $\mathcal {S}$ containing
the irreducible algebraic family $\{E+A_E\}_{E\in X[n-2w-1]}$. To prove Theorem \ref{oo1} for the point $P$ it is sufficient to find a reduced
$S\in \Gamma$ such that $S\cap U =\emptyset$. $\Gamma$ is an irreducible algebraic family of divisors of $X$. We have $\dim (\Gamma ) =n-2w-1$. By
construction for a general $E\subset X$ such that $\sharp (E)=n-2w-1$ there is $B_E\in \Gamma$ such that $E \subset B_E$. For
a general $E$ we have $\langle E\rangle \cap \langle W\rangle =\emptyset$. Since $P \notin \langle E\rangle $, the scheme $\ell
_{\langle E\rangle }(W)$ is isomorphic to $W$, $P\in \langle \ell _{\langle E\rangle }(W)\rangle$ and $P\notin \langle
W'\rangle$ for any $W'\subsetneqq \ell _{\langle E\rangle }(W)$. Lemma \ref{e7.1} gives $\ell _{\langle E\rangle }(P)\notin
\sigma _{w-1}(X_E)$ for a general $E$. For general $E$ the degree $2w$ line bundles $\mathcal {O}_X(2W)$ and $\mathcal
{O}_X(1)(-E)$ are not isomorphic. Hence Proposition \ref{e8} and part (i) Lemma \ref{e7.1} applied to the curve $X_E$, the point $\ell _{\langle
E\rangle }(P)$ and the scheme $Z:= \ell _{\langle E\rangle }(W)$ gives that such a divisor $B_E$ is unique. Thus $\Gamma$ is an
involution in the classical terminology (\cite{cc1}, \S 5).

\quad (b1) In this step we assume that $\Gamma$ has no base points. Since $\Gamma$ has no base points and $U$ is a fixed finite set, a general $S\in \Gamma$ is contained in $X\setminus U$. Hence
to conclude the proof of Theorem \ref{oo1} it is sufficient to prove that a general $S\in \Gamma$ is reduced. Either $S$ is reduced or there is an integer $m\ge
2$ such that each connected component of $S$ appears with multiplicity $m$ (\cite{cc1}, Proposition 5.8). Since $F = E+A_E$
with
$E$ reduced and
$\sharp (E)>\deg (A_E)$ this is obviously false. 

\quad (b2) Assume that $\Gamma$ has a base locus. Call $D$ the base locus of
$\Gamma$. Thus the irreducible algebraic family $\Gamma (-D)$ of effective divisors of $X$ has the same dimension and it is
base point free. We have
$F = D+F'$ with $F'$ general in $\Gamma (-D)$. Since $\Gamma (-D)$ is an involution without base points and whose general
member has at least one reduced connected component (a connected component of $E$), its general member $F'$ is reduced
(\cite{cc1}, Proposition 5.8). Since $D$ has finite support and $F'$ is general, we also have $F'\cap D=\emptyset$. Fix
$O\in D_{red}$. We have $O\notin \langle W\rangle$, because $\deg (W\cup \{O\}) =w+1$ and every degree $w+1$ subscheme of $X$
is linearly independent. Let $E_1$
be the union of
$O$ and
$n-2w-2$ general point of
$X$
(if $n=2w+2$, then $E_1= \{O\}$). Since $O\notin \langle W\rangle$ and $X$ is non-degenerate,
we have $\langle W\rangle \cap \langle E_1\rangle =\emptyset$. Thus the point $\ell _{\langle E_1\rangle}(P)$ is contained
in the linear span of the degree $w$ subscheme $\ell _{\langle E_1\rangle}(W)$ of the linearly normal elliptic curve
$X_{E_1}\subset \mathbb {P}^{2w+2}$, but not in the linear span of any proper subscheme of it.
Since any degree $2w+1$ subscheme of $X_{E_1}$ is linearly independent, we get
$b_{X_{E_1}}(\ell _{\langle E_1\rangle}(P)) = w+1$. Since $O$ is a base point of $\Gamma$, we also get
a one-dimensional family $\Gamma '$ of distinct degree $w+1$ subschemes of $X_{E_1}$ such that
$\ell _{\langle E_1\rangle}(P)$ is in the linear span of each of it. Part (a) of Proposition \ref{f2} gives that these
schemes are pairwise disjoint. Hence $\deg (D)=1$ and $D = \{O\}$ (as schemes). Since $E+A_E$ has at least $\deg (E_1)$
points with multiplicity one, at least one connected component of the general element $F'$ of $\Gamma '$ is reduced. Since
$F'$ is a general element of the base point free involution $\Gamma (-D)$, $F'$ is reduced (\cite{cc1}, Proposition 5.8).
Since
any degree $n$ divisor of $X$ is linearly independent, we have
$\langle E_1\rangle \cap X=E_1$ (scheme-theoretic intersection). Since $\Gamma '$ has no base points, we may also
assume that
$F' \cap (X_{E_1}\setminus \ell _{\langle E_1\rangle}(X\setminus E_1)) =\emptyset$.
Hence the counterimage $F''$ of $F'$ in $X$ is disjoint from $E_1$. Thus $F'' \cup E_1$ is reduced.
Since $P\in \langle F'' \cup E_1\rangle$, we get $r_X(P)\le n+1-w$. 

Steps (b1) and (b2) conclude the proof of Theorem \ref{oo1}.\qed

\providecommand{\bysame}{\leavevmode\hbox to3em{\hrulefill}\thinspace}

\end{document}